\title[Kaufmann--Clote question and weak regularity principle]{The Kaufmann–-Clote question on end extensions of models of arithmetic and the weak regularity principle}
\author{Mengzhou Sun}
\address{Department of Mathematics, National University of Singapore, Singapore 119076}
\email{sunm07@u.nus.edu}
\keywords{models of arithmetic, Kaufmann--Clote question, end extensions, regularity principle, second-order ultrapower}
\subjclass{03C62, 03F30, 03F35, 03H15}
\begin{document}

\begin{abstract}
    We investigate the end extendibility of models of arithmetic with restricted elementarity. By utilizing the restricted ultrapower construction in the second-order context, for each $n\in\IN$ and any countable model of $\bd\Sigma_{n+2}$, we construct a proper $\Sigma_{n+2}$-elementary end extension satisfying $\bd\Sigma_{n+1}$, which answers a question by Clote positively. 
    We also give a characterization of countable models of $\ind\Sigma_{n+2}$ in terms of their end extendibility similar to the case of $\bd\Sigma_{n+2}$. 
    Along the proof, we will introduce a new type of regularity principles in arithmetic called the weak regularity principle, which serves as a bridge between the model's end extendibility and the amount of induction or collection it satisfies.
\end{abstract}
\maketitle
\section{Introduction}
End extensions are of significant importance and have been studied intensively in the model theory of arithmetic: The classical MacDowell--Specker theorem~\cite{incoll:MDS} showed that every model of $\PA$ admits a proper elementary end extension. Around two decades later, Paris and Kirby~\cite{incoll:PK} studied the hierarchical version of the MacDowell--Specker theorem for fragments of $\PA$. In fact, they showed that for countable models, end extendibility with elementarity characterizes the collection strength of the ground model.
\begin{thm}[Paris--Kirby]\label{thm:paris-kirby}
    Let $M$ be a countable model of $\ind\Delta_0$. For each $n\in\IN$, $M$ satisfies $\bd\Sigma_{n+2}$ if and only if $M$ has a proper $\Sigma_{n+2}$-elementary end extension $K$.
\end{thm}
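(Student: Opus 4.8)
The plan is to prove the two directions of the Paris–Kirby theorem separately; both are classical, so I will indicate the standard arguments.

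For the easy direction, suppose $M$ has a proper $\Sigma_{n+2}$-elementary end extension $K$. I want to deduce $\bd\Sigma_{n+2}$ in $M$, and it is cleanest to argue the contrapositive via the equivalent overspill/underspill formulations, or directly: given a $\Sigma_{n+2}$-definable (with parameters in $M$) function $f$ and a bound $a\in M$ such that $f$ maps $[0,a)$ into $M$, I must bound the range. Pick any $c\in K\setminus M$; since the extension is end, $c$ lies above every element of $M$. Working in $K$, the function $f$ (defined by the same $\Sigma_{n+2}$ formula, which is absolute in the relevant direction by $\Sigma_{n+2}$-elementarity) still maps $[0,a)$ somewhere, and by $\ind\Delta_0$ — or more simply by the least-number principle available in $K$ — the $\Sigma_{n+2}$-definable set $\{f(i) : i<a\}$ has a supremum below $c$ in $K$, hence is bounded by some $d<c$. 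By $\Sigma_{n+2}$-elementarity going down, that bounding statement reflects to $M$, giving a bound in $M$. Actually the slick version: $\Sigma_{n+2}$-collection for $M$ follows because $K \models \ind\Sigma_{n+2}$ is not needed — one only needs that $K$ thinks $[0,a)$ is ``finite'' relative to $c$ and pushes the bound down. The key point is that strict $\Sigma_{n+2}$-elementarity lets $\Sigma_{n+2}$ and $\Pi_{n+1}$ statements transfer downward from $K$ to $M$.

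For the hard direction — the real content — assume $M \models \bd\Sigma_{n+2}$ and build $K$. The standard tool is an ultrapower-type or Skolem-Henkin construction. Since $M$ is countable, enumerate all $\Sigma_{n+2}$ (Henkin) sentences with parameters from $M\cup\{c\}$ for a new constant $c$, and build a complete consistent $\Sigma_{n+2}$-type for $c$ over $M$ that is unbounded (i.e. contains $c>a$ for every $a\in M$), using $\bd\Sigma_{n+2}$ to verify at each finite stage that the type remains realizable — this is where collection does the work, guaranteeing that finitely many $\Sigma_{n+2}$ conditions consistent with unboundedness can be simultaneously met, essentially because $\bd\Sigma_{n+2}$ is equivalent to a form of the $\Sigma_{n+2}$-overspill/finite-axiomatizability of types. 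Then take $K$ to be the set of Skolem terms $t(c,\bar a)$ for $\Sigma_{n+2}$-Skolem functions (which exist and are total by $\bd\Sigma_{n+2}$, since $\bd\Sigma_{n+2} \equiv \ind\Sigma_{n+1}$ suffices to get $\Sigma_{n+1}$-Skolem functions, and $\Sigma_{n+2}$-definable functions are handled via their graphs). One checks: (a) $K$ is a model of the relevant base theory; (b) the Tarski–Vaught criterion at the $\Sigma_{n+2}$ level, so $M \elemsub_{n+2} K$; and (c) $K$ is an \emph{end} extension — no element of $K$ lies strictly below an element of $M$ without already being in $M$ — which holds because the Skolem functions are $\Sigma_{n+1}$-definable and $\bd\Sigma_{n+1}$ (available since $\bd\Sigma_{n+2}$ is stronger) forces the value of a Skolem term below a given $a\in M$ to be computed already inside $M$.

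**The main obstacle** is step (c) together with the verification at each stage of the type construction that $\bd\Sigma_{n+2}$ genuinely suffices: the delicate part is recognizing that a finite conjunction of $\Sigma_{n+2}$ formulas $\bigwedge_i \exists y\, \theta_i(x,y,\bar a)$ that is unbounded-consistent with the diagram can be witnessed by a \emph{single} $\Sigma_{n+2}$ formula whose defining instance has arbitrarily large solutions, which is exactly the combinatorial heart of $\Sigma_{n+2}$-collection. I expect the paper to route this through a known equivalent of $\bd\Sigma_{n+2}$ (overspill, or the existence of a ``large'' $\Sigma_{n+2}$-definable set below any element) rather than re-deriving it; accordingly my proof would cite the standard lemma that over $\ind\Delta_0$, $\bd\Sigma_{n+2}$ is equivalent to $\Sigma_{n+2}$-collection and to the $\Pi_{n+1}$-overspill principle, and then the end-extension construction goes through routinely.
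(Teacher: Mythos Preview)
The paper does not prove this theorem: it is quoted as a classical result of Paris and Kirby, with citation, to set the stage for the paper's own questions. The only remark about its proof is that ``the original proof of Paris--Kirby's theorem is based on a first-order restricted ultrapower construction,'' so there is no argument in the paper to compare yours against.

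On your sketch itself: for the hard direction, a Henkin-style type construction is a legitimate alternative to the restricted ultrapower, and both routes appear in the literature. One slip: you write ``$\bd\Sigma_{n+2}\equiv\ind\Sigma_{n+1}$''; this is false (they are not equivalent), though the implication $\bd\Sigma_{n+2}\vdash\ind\Sigma_{n+1}$ that you actually need does hold.

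Your easy direction, however, contains a real gap. You claim that in $K$, ``by $\ind\Delta_0$ --- or more simply by the least-number principle available in $K$ --- the $\Sigma_{n+2}$-definable set $\{f(i):i<a\}$ has a supremum below $c$.'' But $\ind\Delta_0$ gives no least-number principle or suprema for $\Sigma_{n+2}$-definable sets; that would need $\ind\Sigma_{n+2}$ in $K$, which (as you yourself note) must not be assumed. The correct argument uses endness more directly: for each $x<a$ the witness $y_x$ already lies in $M$ and hence below any $c\in K\setminus M$, so $K\models\forall x{<}a\,\exists y{<}c\,\theta(x,y)$ with $\theta\in\Pi_{n+1}$; one then has to check that the resulting existential statement $\exists b\,\forall x{<}a\,\exists y{<}b\,\theta(x,y)$ is of the right complexity to reflect down via $\Sigma_{n+2}$-elementarity, which typically requires an external induction on $n$ (bootstrapping through $\bd\Sigma_k$ for $k\le n+1$). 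Your sketch does not engage with this point.
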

For the left-to-right direction, the theorem above does not explicitly specify what theory the end extension $K$ can satisfy.
This amount of elementarity stated in the theorem already implies $K\models \ind \Sigma_n$, and Paris--Kirby's proof actually indicates that $K$ cannot always satisfy $\ind\Sigma_{n+1}$, since this would imply $M\models \bd\Sigma_{n+3}$. Moreover,  for each $n\in\IN$, Cornaros and Dimitracopoulos~\cite{art:cd} constructed a countable model of $\bd\Sigma_{n+2}$ which does not $\Sigma_{n+1}$-elementarily end extend to any model of $\ind\Sigma_{n+1}$.
So with $\Sigma_{n+2}$-elementarity, the theory that the end extension $K$ may satisfy lies between $\ind\Sigma_n$ and $\ind\Sigma_{n+1}$, and the following question arises naturally:
\begin{question}[Kaufmann--Clote]\label{question:kc}
    For $n\in\IN$, does every countable model $M\models\bd\Sigma_{n+2}$ have a proper $\Sigma_{n+2}$-elementary end extension $K\models \bd\Sigma_{n+1}$?
\end{question}
The question was included in the list of open problems in~\cite[Page 12, Problem 33]{incoll:openproblems} edited by Clote and Kraj\'i\v cek.
It was first raised by Clote in \cite{incoll:clote}, where he noted that the same question in the context of models of set theory had been previously posed by Kaufmann in~\cite{incoll:Kaufmann}.
In the same paper, Clote~\cite[Proposition 7]{incoll:clote} showed that every countable model $M\models\ind\Sigma_{n+2}$ admits a $\Sigma_{n+2}$-elementary proper end extension to some $K\models M\text{-}\bd\Sigma_{n+1}$, which means all the instances of $\bd\Sigma_{n+1}$:
\[\falt x a\ex y\phi(x,y)\rightarrow\ex b\falt x a\exlt y b\phi(x,y)\]
where $a\in M$ while parameters in $K$ are allowed in $\phi(x,y)\in\Sigma_{n+1}(K)$.
Cornaros and Dimitracopoulos~\cite{art:cd} showed that every countable model $M\models \bd\Sigma_{n+2}$ has a $\Sigma_{n+2}$-elementary proper end extension $K\models \bd\Sigma_{n+1}^-$(the parameter-free $\Sigma_{n+1}$-collection).
In this paper, we give an affirmative answer to Question~\ref{question:kc}.

The original proof of Paris--Kirby's theorem is based on a first-order restricted ultrapower construction. The ultrapower is generated by a single element when viewed from the ground model. One can show that, by relativizing the proof of pointwise $\Sigma_{n+1}$-definable models do not satisfy $\bd\Sigma_{n+1}$ (e.g.,~\cite[IV, Lemma 1.41]{book:HP}, such ultrapowers always fail to satisfy $\bd\Sigma_{n+1}$ in the question.
To tackle this issue, we expand our model into a second-order structure satisfying $\WKL_0$ and the end extension $K$ will be a second-order restricted ultrapower with respect to that structure.

For us, one of the motivations of studying this question is to find a model-theoretic characterization of (countable) models of $\ind\Sigma_{n+2}$ analogous to Theorem~\ref{thm:paris-kirby}.
Despite the fact that the extension in Question~\ref{question:kc} is insufficient for characterization, a slight generalization of it will suffice.
We will show that for any countable model $M\models\ind\Delta_0+\exp$,
$M\models\ind\Sigma_{n+2}$ if and only if $M$ admits a proper $\Sigma_{n+2}$-elementary end extension $K\models M\hyp\ind\Sigma_{n+1}$, whose definition is similar to $M\hyp\bd\Sigma_{n+1}$.

The regularity principle is the key to connecting end extensions with the arithmetic theories that the models satisfy.
Through the end extension, we can employ a `nonstandard analysis' style argument to prove certain types of regularity principle in the ground model.
Here we provide an example of such an argument. Notice that $K\models\bd\Sigma_{n+1}$ plays an crucial role in the proof. 
\begin{proposition}\label{prop:endextrpi}
    For each $n\in \IN$, let $M\models\ind\Delta_0$. If $M$ admits a proper $\Sigma_{n+2}$-elementary end extension $K\models \bd\Sigma_{n+1}$, then $M$ satisfies the following principle:
    \[\fa x\exlt y a\phi(x,y)\rightarrow\exlt y a\excf x\phi(x,y).\]
    where $a\in M $, $\phi(x,y)\in \Pi_{n+1}(M)$ and $\exists^{\cf} x$ abbreviates $\fa b\exists x>b$. 
\end{proposition}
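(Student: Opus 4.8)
The plan is a three-step transfer argument: push the hypothesis from $M$ up to $K$, use a point of $K\setminus M$, and push the conclusion back down to $M$. Fix $\phi(x,y)\in\Pi_{n+1}(M)$ and $a\in M$ with $M\models\fa x\exlt y a\phi(x,y)$, and abbreviate $\psi(x):=\exlt y a\phi(x,y)$. First I would record that $M\models\bd\Sigma_{n+1}$; this is standard for a model of $\ind\Delta_0$ carrying a proper $\Sigma_{n+2}$-elementary end extension (cf.~\cite{incoll:PK}), and in any case it can be extracted from $K\models\bd\Sigma_{n+1}$ by bouncing each instance of $\bd\Sigma_{n+1}$ between $M$ and $K$ along the $\Sigma_{n+2}$-elementarity. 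The crucial point is then that, provably in $\bd\Sigma_{n+1}$, the formula $\psi(x)=\exlt y a\phi(x,y)$ is equivalent to a $\Pi_{n+1}$ formula — this is the standard collapse of a bounded existential quantifier standing in front of a $\Pi_{n+1}$ matrix over $\bd\Sigma_{n+1}$ (see, e.g., the treatment of the arithmetical hierarchy in~\cite{book:HP}). Consequently $\fa x\psi(x)$ is, over $\bd\Sigma_{n+1}$, equivalent to a $\Pi_{n+1}$ sentence $\Psi$ with parameters in $M$; since $M\models\bd\Sigma_{n+1}$ we get $M\models\Psi$, hence $K\models\Psi$ by $\Sigma_{n+2}$-elementarity, and reading the equivalence the other way inside $K$ (this is where $K\models\bd\Sigma_{n+1}$ is used) we obtain $K\models\fa x\exlt y a\phi(x,y)$.

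Next, since $K$ is a proper end extension of $M$, pick $c\in K\setminus M$. Then $K\models\exlt y a\phi(c,y)$, so fix $y_0<a$ in $K$ with $K\models\phi(c,y_0)$; because $a\in M$ and $K$ end-extends $M$, in fact $y_0\in M$. I claim this $y_0$ witnesses the conclusion. Given any $b\in M$, the element $c$ lies above all of $M$, so $c>b$ and $K\models\phi(c,y_0)$, whence $K\models\exlg x b\phi(x,y_0)$; this is a $\Sigma_{n+2}$ sentence with parameters $b,y_0\in M$, so $\Sigma_{n+2}$-elementarity gives $M\models\exlg x b\phi(x,y_0)$. As $b$ was an arbitrary element of $M$, $M\models\fa b\exlg x b\phi(x,y_0)$, i.e.~$M\models\excf x\phi(x,y_0)$, and therefore $M\models\exlt y a\excf x\phi(x,y)$.

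The one delicate point is the first transfer, $K\models\fa x\exlt y a\phi(x,y)$. Counting quantifiers naively makes $\fa x\exlt y a\phi(x,y)$ a $\Pi_{n+3}$ sentence, which $\Sigma_{n+2}$-elementarity cannot move from $M$ to $K$; and the tempting alternative of overspilling the set $\{x:K\models\exlt y a\phi(x,y)\}$ — which is $\Pi_{n+1}$-definable over $\bd\Sigma_{n+1}$ and contains all of $M$ — past the cut $M$ inside $K$ is unavailable, since overspill for $\Pi_{n+1}$ formulas would require $\ind\Sigma_{n+1}$, whereas $K$ is only assumed to satisfy $\bd\Sigma_{n+1}$. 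The way around this, and the place where the hypotheses $M,K\models\bd\Sigma_{n+1}$ are genuinely used, is precisely the collection-driven reduction of $\exlt y a\phi(x,y)$ to a $\Pi_{n+1}$ formula, which lets the hypothesis pass upward to $K$ by ordinary elementarity rather than by overspill.
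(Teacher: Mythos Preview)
Your proof is correct and follows essentially the same route as the paper: collapse $\exlt y a\phi(x,y)$ to a $\Pi_{n+1}$ formula using $\bd\Sigma_{n+1}$ in both $M$ and $K$, transfer the universal statement up to $K$ by $\Sigma_{n+2}$-elementarity, pick a nonstandard witness, and bring each $\exlg x b\phi(x,y_0)$ back down. The paper is terser---it simply asserts that both $M$ and $K$ satisfy $\bd\Sigma_{n+1}$ and that the formula is $\Pi_{n+1}$ over that theory---whereas you spell out why $M\models\bd\Sigma_{n+1}$ and flag the quantifier-counting pitfall, but the underlying argument is identical.
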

\begin{proof}
    Suppose $M\models\fa x\exlt y a\phi(x,y)$, then it is equivalent to a $\Pi_{n+1}$-formula over $\bd\Sigma_{n+1}$. Since both $M$ and $K$ are models of $\bd\Sigma_{n+1}$,
    $K$ satisfies the same formula by elementarity.
    Pick some arbitrary $d>M$ in $K$ and let $c<a$ such that $K\models\phi(d,c)$. Now for each $b\in M$, $K\models \exlg x b\phi(x,c)$ and it is witnessed by $d$. Transferring each of these formulas back to $M$ by elementarity, we have $M\models\exlg x b\phi(x,c)$ for any $b\in M$, which means $M\models\excf x\phi(x,c)$.
\end{proof}
We call the principle above \defm{weak regularity principle}, and denote it by $\wrd\phi$.
The proposition above, together with the affirmative answer to Question~\ref{question:kc}, implies that $\bd\Sigma_{n+2}\vdash \wrd\Pi_{n+1}$ for each $n\in \IN$.\par


Similar to the argument above, we will show that if the extension $K\models M\hyp\ind\Sigma_{n+1}$, then the ground model $M$ will satisfy some other form of weak regularity principle that implies $\ind\Sigma_{n+2}$.

The paper is organized as follows: 
In Section 2, we present the necessary notations and fundamental facts regarding models of arithmetic
In Section 3 we review the definition of the second-order restricted ultrapower, and state some basic properties of it. 
In Section 4, we provide an affirmative answer to Question~\ref{question:kc} and present the construction of an end extension that characterizes countable models of $\ind\Sigma_{n+2}$ as mentioned above.
In Section 5, we formally introduce the weak regularity principle $\wrd\phi$, and calibrate its strength within the I-B hierarchy.
Finally, putting the results in Section 4 and Section 5 together, we establish a model-theoretic characterization of countable models of $\ind\Sigma_{n+2}$ analogous to Theorem~\ref{thm:paris-kirby}.
\section{Preliminaries}
We assume the reader is familiar with some basic concepts and facts in model theory of first- and second-order arithmetic \cite{book:HP,book:kaye}. We reserve the symbol $\IN$ for the set of standard natural numbers. 
For each $n\in \IN$, let $\Sigma_n$ and $\Pi_n$ be the usual classes of formulas in the arithmetic hierarchy of first-order arithmetic.
Given a model of first-order arithmetic $M$, a formula is $\Delta_n$ over $M$ if it is equivalent to both a $\Sigma_n$ and a $\Pi_n$ formula in $M$, or simply $\Delta_n$ if the model involved is clear from the context.
$\Sigma_n\wedge\Pi_n$ is the class of formulas which is the conjunction of a $\Sigma_n$- and a $\Pi_n$-formula, and $\Sigma_n\vee\Pi_n$ is defined similarly.
$\Sigma_0(\Sigma_n)$ is the closure of $\Sigma_n$ formulas under Boolean operations and bounded quantification.
$\Sigma_n^0$, $\Pi_n^0$, $\Delta_n^0$ and $\Sigma_0(\Sigma_n^0)$ are their second-order variants respectively.
Given a model of first-order arithmetic $M$, $\Delta_n(M)$ is the class of $\Delta_n$-formulas over $M$, potentially including parameters from 
$M$ that are not explicitly shown.
$\Sigma_n(M)$, $\Pi_n(M)$ are defined similarly, as well as their second-order variants $\Delta_n^0(M,\mathcal{X})$, $\Sigma_n^0(M,\mathcal{X})$ and $\Pi_n^0(M,\mathcal{X})$ for some model of second-order arithmetic $(M,\mathcal{X})$.
Finally, $\excf {x}\dots$ is the abbreviation of $\fa b\exlg{x}{b}\dots$.\par
For each $n\in \IN$, let $\bd\Sigma_n$ and $\ind\Sigma_n$ be the collection scheme and the induction scheme for $\Sigma_n$ formulas respectively. We assume that all the $\bd\Sigma_n$ include $\ind\Delta_0$, and all the theories considered include $\PA^-$, which is the theory of non-negative parts of discretely ordered rings. $\ind\Sigma_n^0$ and $\bd\Sigma_n^0$ are their second-order counterparts, respectively.

For any element $c$ in some model $M\models\ind\Delta_0+\exp$, we identify $c$ with a subset of $M$ by defining $x\in c$ to mean the $x$-th digit in the binary expansion of~$c$ is~$1$.

$\RCA_0$ is the subsystem of second-order arithmetic consisting of Robinson arithmetic, $\ind\Sigma_1^0$, $\Delta_1^0$-comprehension, and $\WKL_0$ consists of $\RCA_0$ and a statement asserting that every infinite binary tree has an infinite path.
A tree $T$ in the second-order universe is called \defm{bounded} if there is a total function $f$ in the second-order universe such that $\sigma(x)<f(x)$ for any $\sigma\in T$ and $x<\len \sigma$. 
It is provable in $\WKL_0$ that every infinite bounded tree has an infinite path~\cite[Lemma IV.1.4]{book:simpson}. For each $n\in \IN$, every countable model of $\bd\Sigma_{n+2}^0$ admits a countable $\omega$-extension (i.e., an extension only adding second-order objects) to some model satisfying $\WKL_0+\bd\Sigma_{n+2}^0$~\cite{incoll:Hajek}.

Considering extensions of models of arithmetic, we say an extension $M\subseteq K$ of models of first-order arithmetic is \defm{$\Sigma_n$-elementary}, if all the $\Sigma_n(M)$-formulas are absolute between $M$ and $K$, and we write $M\elemsub_{\Sigma_n} K$.
We say an extension $M\subseteq K$ is an \defm{end extension}, if every element of $K\setminus M$ is greater than any element of the ground model $M$.
This is denoted by $M\subseteq_{\ee}K$, or $M\elemsub_{\ee,\Sigma_n}K$ if we also have $M\elemsub_{\Sigma_n}K$.
Strictly speaking, We view second-order structures as two-sorted first-order structures, so by extensions of second-order structures, we mean extensions of the corresponding two-sorted first-order structures. 
We write $(M,\mathcal{X})\elemsub_{\Sigma_n^0}(K,\mathcal{Y})$ if all the $\Sigma_n^0(M,\mathcal{X})$-formulas are absolute between the two structures. We say an extension of second-order structures is an \defm{end extension} if its first-order part is an end extension, and we denote this by $(M,\mathcal{X})\subseteq_{\ee}(K,\mathcal{Y})$, or $(M,\mathcal{X})\elemsub_{\ee,\Sigma_n^0}(K,\mathcal{Y})$ if we also have $(M,\mathcal{X})\elemsub_{\Sigma_n^0}(K,\mathcal{Y})$.

\section{Second-order restricted ultrapowers}
The second-order restricted ultrapower resembles the usual ultrapower construction in model theory, but instead of working on the class of all subsets of the model and all functions from the model to itself, we only consider the sets and functions in the second-order universe. For completeness, we review the definition and some basic facts about it.
All the results appear in \cite{art:Kirby_uf} except Lemma~\ref{lem:groundCA} and Corollary~\ref{cor:1stele}. Throughout this section we fix some arbitrary second-order structure $(M,\mathcal{X})\models \RCA_0$.
\begin{definition}[Second-order restricted ultrapower]
    The second-order part of $(M,\mathcal{X})$ forms a Boolean algebra under inclusion and Boolean operations. Let $\u$ be an ultrafilter on $\mathcal{X}$ such that all the elements of $\u$ are cofinal in $M$, and $\f$ be the class of all the total functions from $M$ to $M$ in $\mathcal{X}$. Define an equivalence relation $\sim$ on $\f$ by 
    \[f\sim g\iff \{i\in M\mid f(i)=g(i)\}\in \u,\]
    where $f,g\in\f$.
    The second-order restricted ultrapower of $M$, denoted by $\ultp$, is the set of equivalence classes $[f]$ for $f\in \f$ modulo $\sim$.
    The interpretations of symbols in the language of first-order arithmetic are defined similarly:
    \[[f]+[g]=[f+g],\]
    \[[f]\times[g]=[f\times g],\]
    \[[f]<[g]\iff \{i\in M\mid f(i)<g(i)\}\in \u.\]
    Here $f+g$ and $f\times g$ are the pointwise sum and product of $f$ and $g$ as functions.
    $M$ naturally embeds into $\ultp$ by identifying elements of $M$ with constant functions.
    Moreover, $\ultp$ is a proper extension of $M$ since the equivalence class of identity function of $M$ is greater than any equivalence class of constant function.
    $\ultp$ admits a natural second-order expansion inherited from $\mathcal{X}$, namely for $A\in\mathcal{X}$ and $[f]\in\ultp$, we define
    \[[f]\in A\iff \{i\in M\mid f(i)\in A\}\in \u.\]
    We denote the expanded structure of the ultrapower by $(\ultp,\mathcal{X})$.
    It is easy to show that for $i\in M$ and $A\in\mathcal{X}$, $i\in A$ holds in $(M,\mathcal{X})$ if and only if it holds in $(\ultp,\mathcal{X})$, so we may view $(\ultp,\mathcal{X})$ as an extension of $(M,\mathcal{X})$, where the second-order part is an injection. 
\end{definition}
The first-order restricted ultrapower is defined similarly, but with $\f$ and $\u$ replaced by the corresponding first-order definable classes.
For example, the $\Delta_1$ ultrapower uses the class of $\Delta_1$-definable subsets and $\Delta_1$-definable total functions.\par 

From now on we also fix an ultrafilter $\u$ on $\mathcal{X}$ such that all the elements of $\u$ are cofinal in $M$.

Generally, \L o\'s's theorem does not hold for restricted ultrapowers, but a restricted version of it does hold:

\begin{thm}[Restricted \L o\'s's theorem]\label{thm:los}
    Let $(\ultp,\mathcal{X})$ be the second-order restricted ultrapower. Then the following holds:
    \begin{enumerate}
        \item If $\phi(\ov x)$ is a $\Sigma_1^0(M,\mathcal{X})$-formula, then 
        \[(\ultp,\mathcal{X})\models \phi(\ov {[f]})\iff \exists {{A}\in{\u}}, A\subseteq \{i\in M\mid (M,\mathcal{X})\models \phi(\ov {f(i)})\}.\]
        \item If $\phi(\ov x)$ is a $\Delta_1^0(M,\mathcal{X})$ formula over $(M,\mathcal{X})$, then 
        \[(\ultp,\mathcal{X})\models \phi(\ov {[f]})\iff \{i\in M\mid (M,\mathcal{X})\models \phi(\ov {f(i)})\}\in\u.\]
    Here the right-hand side makes sense in view of $\Delta_1^0$-comprehension of $(M,\mathcal{X})$.\qed 
    \end{enumerate}
\end{thm}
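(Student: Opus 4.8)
I would prove Theorem~\ref{thm:los} by induction on the logical build-up of $\phi$, with the whole argument driven by the fact that $\u$ is an ultrafilter all of whose members are cofinal in $M$, together with the closure properties of $\mathcal{X}$ provable in $\RCA_0$. As a preliminary I would check, by induction on terms, that $t^{\ultp}(\overline{[f]})=[\,i\mapsto t^{M}(\overline{f(i)})\,]$ for every term $t$, the function $i\mapsto t^{M}(\overline{f(i)})$ being a member of $\mathcal{X}$ since $\RCA_0$ proves $\mathcal{X}$ closed under composition. Combined with the defining clauses for $+,\times,<,\in$ in $(\ultp,\mathcal{X})$, this settles atomic $\phi$ at once: for atomic $\phi$ the ``set'' $S_\phi:=\{i\in M\mid (M,\mathcal{X})\models\phi(\overline{f(i)})\}$ is a genuine element of $\mathcal{X}$ --- it is $\Delta_1^0(M,\mathcal{X})$, the only unbounded ingredient being the evaluation of the functions $f$, which is $\Sigma_1^0$ and, by their totality, also $\Pi_1^0$, so $\Delta_1^0$-comprehension applies --- and $(\ultp,\mathcal{X})\models\phi(\overline{[f]})$ iff $S_\phi\in\u$.

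Next I would push the induction through for $\phi\in\Delta_0^0$, maintaining the invariant that $S_\phi\in\mathcal{X}$ and $(\ultp,\mathcal{X})\models\phi(\overline{[f]})\iff S_\phi\in\u$. The connectives are pure bookkeeping with the ultrafilter axioms: $S_{\phi\wedge\psi}=S_\phi\cap S_\psi$ and $S_{\neg\phi}=M\setminus S_\phi$, so $\wedge$ uses closure under intersection and $\neg$ uses that $\u$ is an ultrafilter, not merely a filter. The only case needing a construction is $\phi=\exists y<t(\overline{x})\,\psi$: for $\Rightarrow$ one intersects the $\u$-set witnessing $\psi$ with the $\u$-set on which the chosen function is below $t$; for $\Leftarrow$ one defines the witnessing function $g$ by bounded minimisation --- $g(i)$ is the least $y<t^{M}(\overline{f(i)})$ with $\psi(\overline{f(i)},y)$, or $0$ if none --- which is well defined since $M\models\ind\Delta_0$ and lies in $\mathcal{X}$ because its graph is $\Delta_1^0(M,\mathcal{X})$. (The case $\forall y<t$ reduces to this via $\neg$.) Since $S_\phi\in\mathcal{X}$ and $\u$ is upward closed, for $\Delta_0^0$ formulas the conditions ``$S_\phi\in\u$'' and ``$\exists A\in\u,\ A\subseteq S_\phi$'' are equivalent, so (1) and (2) both hold and agree at the $\Delta_0^0$ level.

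For (1) with $\phi\in\Sigma_1^0$, using that $\RCA_0\vdash\ind\Sigma_1^0$ hence $\bd\Sigma_1^0$, one may take $\phi=\exists y\,\psi(\overline{x},y)$ with $\psi\in\Delta_0^0$. If $(\ultp,\mathcal{X})\models\phi(\overline{[f]})$, witnessed by some $[g]$, then by the $\Delta_0^0$ case $A:=\{i\mid\psi(\overline{f(i)},g(i))\}\in\u$ and $A\subseteq\{i\mid\phi(\overline{f(i)})\}$, which is the required $A$. Conversely, given $A\in\u$ with $A\subseteq\{i\mid\exists y\,\psi(\overline{f(i)},y)\}$, put $g(i)=$ the least $y$ with $\psi(\overline{f(i)},y)$ for $i\in A$ and $g(i)=0$ otherwise; $g$ is well defined by $\ind\Delta_0$ in $M$, and it lies in $\mathcal{X}$ because its graph is $\Sigma_1^0$ and, by uniqueness of the least witness together with totality, also $\Pi_1^0$, so $\Delta_1^0$-comprehension gives $g\in\mathcal{X}$. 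Then $A\subseteq\{i\mid\psi(\overline{f(i)},g(i))\}\in\u$, and the $\Delta_0^0$ case yields $(\ultp,\mathcal{X})\models\psi(\overline{[f]},[g])$, hence $\phi(\overline{[f]})$. Finally, for (2) I would argue: if $\phi$ is $\Delta_1^0$ over $(M,\mathcal{X})$, fix a $\Sigma_1^0$ and a $\Pi_1^0$ formula equivalent to it in $(M,\mathcal{X})$; then $S_\phi\in\mathcal{X}$ by $\Delta_1^0$-comprehension, and applying (1) to the $\Sigma_1^0$ representative gives $(\ultp,\mathcal{X})\models\phi(\overline{[f]})\iff\exists A\in\u,\ A\subseteq S_\phi\iff S_\phi\in\u$, the last step by upward closure of $\u$.

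The main obstacle --- and the reason the statement is confined to the $\Sigma_1^0/\Delta_1^0$ level rather than asserting the full \L o\'s equivalence --- is ensuring that the witnessing functions produced in the $\Leftarrow$ directions genuinely belong to $\mathcal{X}$. This is exactly what $\Delta_1^0$-comprehension buys for a single unbounded existential over a $\Delta_0^0$ matrix, but it breaks down once an unbounded universal is prepended, since then the natural Skolem function is only $\Sigma_2^0$. Everything else is routine manipulation of the ultrafilter axioms and of the closure of $\mathcal{X}$ under the operations available in $\RCA_0$.
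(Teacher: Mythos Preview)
The paper does not supply its own proof of this theorem: the statement ends with a terminal \qed\ and is attributed, together with the other basic facts about second-order restricted ultrapowers in that section, to Kirby. Your argument is correct and is exactly the standard proof one would write --- induction through $\Delta_0^0$ using the ultrafilter axioms and bounded minimisation for the bounded-quantifier step, then handling a single unbounded existential by producing the least-witness Skolem function via $\Delta_1^0$-comprehension, and finally reading off (2) from (1) once $S_\phi\in\mathcal{X}$. There is nothing in the paper to compare it against.
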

\begin{cor}~\label{cor:2ndele}
     $(M,\mathcal{X})\elemsub_{\Sigma_2^0}(\ultp,\mathcal{X})$.
\end{cor}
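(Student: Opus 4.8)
The plan is to derive $(M,\mathcal{X})\elemsub_{\Sigma_2^0}(\ultp,\mathcal{X})$ from the restricted \L o\'s's theorem (Theorem~\ref{thm:los}) by the standard two-step argument: first handle $\Pi_1^0$-formulas, then bootstrap to $\Sigma_2^0$-formulas using the block-of-quantifiers structure. For the first step, let $\psi(\ov x)$ be $\Pi_1^0(M,\mathcal{X})$, say $\psi=\neg\phi$ with $\phi\in\Sigma_1^0(M,\mathcal{X})$. Given elements $\ov{[f]}$ of the ultrapower, I would show $(M,\mathcal{X})\models\psi(\ov{f(i)})$ for all $i$ in some $A\in\u$ if and only if $(\ultp,\mathcal{X})\models\psi(\ov{[f]})$. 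The nontrivial direction uses the contrapositive of part (1) of Theorem~\ref{thm:los}: if $(\ultp,\mathcal{X})\not\models\psi(\ov{[f]})$, i.e. $(\ultp,\mathcal{X})\models\phi(\ov{[f]})$, then there is $A\in\u$ with $A\subseteq\{i\mid (M,\mathcal{X})\models\phi(\ov{f(i)})\}$; since $\u$ is a filter and every element of $\u$ is cofinal (in particular nonempty), no $B\in\u$ can be contained in the complementary set $\{i\mid(M,\mathcal{X})\models\psi(\ov{f(i)})\}$, giving the equivalence. In particular, taking the $f$'s to be constant functions and recalling that $M$ embeds into $\ultp$ via constants, this already yields $(M,\mathcal{X})\elemsub_{\Pi_1^0}(\ultp,\mathcal{X})$, hence also $(M,\mathcal{X})\elemsub_{\Sigma_1^0}(\ultp,\mathcal{X})$.

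For the second step, let $\theta(\ov x)=\ex{\ov y}\psi(\ov x,\ov y)$ with $\psi\in\Pi_1^0(M,\mathcal{X})$, and fix constants $\ov m$ from $M$. If $(M,\mathcal{X})\models\theta(\ov m)$, pick witnesses $\ov m'$ in $M$; by the $\Pi_1^0$-elementarity just established, $(\ultp,\mathcal{X})\models\psi(\ov m,\ov m')$, so $(\ultp,\mathcal{X})\models\theta(\ov m)$. Conversely, if $(\ultp,\mathcal{X})\models\theta(\ov m)$, there are $\ov{[g]}$ in the ultrapower with $(\ultp,\mathcal{X})\models\psi(\ov m,\ov{[g]})$; applying the $\Pi_1^0$ transfer in the other direction gives a set $A\in\u$ such that $(M,\mathcal{X})\models\psi(\ov m,\ov{g(i)})$ for all $i\in A$, and since $A$ is nonempty, fixing any $i_0\in A$ yields witnesses $\ov{g(i_0)}$ in $M$ for $\theta(\ov m)$ in $(M,\mathcal{X})$. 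This proves the $\Sigma_2^0$-formulas with parameters from $M$ are absolute, which is exactly $(M,\mathcal{X})\elemsub_{\Sigma_2^0}(\ultp,\mathcal{X})$.

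The only point requiring genuine care — and the place I expect to spend the most effort — is the backward direction of the $\Pi_1^0$ transfer, where one must rule out that both a $\Sigma_1^0$-formula and its negation hold at $\ov{[f]}$; this is precisely where the cofinality (hence nonemptiness and the filter property) of the sets in $\u$ is used, rather than the full ultrafilter property, and it is the reason the restricted \L o\'s's theorem only gives $\Sigma_1^0$ rather than full elementarity. Everything else is the routine manipulation of quantifier blocks, noting that the parameters from $M$ appearing in $\Sigma_2^0(M,\mathcal{X})$-formulas are handled by identifying them with constant functions in $\f$, and that part~(2) of Theorem~\ref{thm:los} is available for the $\Delta_1^0$ matrix pieces if one prefers to phrase the argument via $\Delta_1^0$-comprehension rather than via the asymmetric condition in part~(1).
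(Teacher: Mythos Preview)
Your overall strategy is sound, but there is a gap in the ``$\Pi_1^0$ transfer'' step. You assert the equivalence: $(\ultp,\mathcal{X})\models\psi(\ov{[f]})$ if and only if $(M,\mathcal{X})\models\psi(\ov{f(i)})$ for all $i$ in some $A\in\u$, where $\psi=\neg\phi\in\Pi_1^0$. The argument you give (via the contrapositive of Theorem~\ref{thm:los}(1)) establishes only that the existence of such an $A$ forces $(\ultp,\mathcal{X})\models\psi(\ov{[f]})$. The converse is precisely the direction you invoke in your second step, and it does not follow from Theorem~\ref{thm:los}(1): the set $\{i:(M,\mathcal{X})\models\psi(\ov{f(i)})\}$ is only $\Pi_1^0$-definable and need not lie in $\mathcal{X}$, so the ultrafilter property on $\mathcal{X}$ gives no grip on it. Fortunately the repair is immediate, since all you really use in the second step is a single $i_0$ with $(M,\mathcal{X})\models\psi(\ov m,\ov{g(i_0)})$; this follows because Theorem~\ref{thm:los}(1) says no $A\in\u$ --- in particular not $M$ itself --- is contained in $\{i:(M,\mathcal{X})\models\phi(\ov m,\ov{g(i)})\}$.

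Once patched, your proof is correct but takes a different route from the paper's. The paper does not attempt a $\Pi_1^0$ transfer for arbitrary $[f]$; instead it shows directly that $\Pi_2^0$-formulas go upward. Given $(M,\mathcal{X})\models\fa x\ex y\psi(x,y)$ with $\psi\in\Delta_0^0$, one uses $\ind\Sigma_1^0$ to pick the least witness and obtain a Skolem function $f\in\f$; then for each $[g]\in\ultp$ one applies \L o\'s to the $\Delta_0^0$ fact $\psi(g(i),(f\circ g)(i))$, which holds for every $i$, to get $(\ultp,\mathcal{X})\models\psi([g],[f\circ g])$ and hence $(\ultp,\mathcal{X})\models\fa x\ex y\psi(x,y)$. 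This sidesteps the asymmetry in the $\Sigma_1^0$ clause of \L o\'s entirely and makes transparent that closure of $\f$ under composition with $\Delta_1^0$-definable Skolem functions is what buys the extra level of elementarity; your approach, by contrast, pulls a single witness down pointwise and is closer in spirit to a Tarski--Vaught argument.
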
 
\begin{proof}
    Let $(M,\mathcal{X})\models \fa x\ex y\psi(x,y)$ for some $\psi(x,y)\in\Delta_0^0(M,\mathcal{X})$. By picking the least witness $y$ of $\psi(x,y)$, we may assume $\psi(x,y)$ defines a total function $f\in\f$, so $(M,\mathcal{X})\models \psi(x,f(x))$ for all $x\in M$. In particular, for each $g\in \f$,
    \[(M,\mathcal{X})\models\psi(g(x),f\circ g(x)).\]
    Here $f\circ g$ is the composition of $f$ and $g$. By restricted \L o\'s's theorem, $(\ultp,\mathcal{X})\models\psi([g],[f\circ g])$ for each $[g]\in\ultp$, so that $(\ultp,\mathcal{X})\models\fa x\ex y\psi(x,y)$.
\end{proof}

\begin{definition}
    We say $\u$ is \defm{additive} if whenever $f\in \f$ is bounded, then there is a $c\in M$ such that $\{i\in M\mid f(i)=c\}\in \u$.
\end{definition}

\begin{lem}\label{lem:additive}
    If $\u$ is additive, then $\ultp$ is an end extension of $M$.
\end{lem}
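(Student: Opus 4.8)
The plan is to reduce the statement to a single case by unwinding the definition of an end extension, and then to close that case using additivity of $\u$ together with $\Delta_1^0$-comprehension in $(M,\mathcal{X})$. Recall that $M$ sits inside $\ultp$ as the classes of constant functions, and that $\ultp$ being an end extension of $M$ means precisely that every $[f]\in\ultp\setminus M$ satisfies $[f]>m$ for all $m\in M$. Since the ordering of $\ultp$ is trichotomous with respect to each such $m$ (because $\u$ is an ultrafilter and $\{i:f(i)<m\}$, $\{i:f(i)=m\}$, $\{i:f(i)>m\}$ partition $M$), it suffices to show: if $[f]\in\ultp$ and $(\ultp,\mathcal{X})\models[f]\le m$ for some $m\in M$, then $[f]=[c]$ for some $c\in M$ (identifying $c$ with the constant function with value $c$). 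Taking the contrapositive then yields the claim.

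So fix such $[f]$ and $m$. By the definition of the ordering on the ultrapower together with $\u$ being an ultrafilter (or by Theorem~\ref{thm:los}, applied to the $\Delta_0^0$ formula $x\le m$), the set $U:=\{i\in M\mid f(i)\le m\}$ belongs to $\u$. The first step is to pass to a bounded representative of the class $[f]$: let $f'\colon M\to M$ be defined by $f'(i)=f(i)$ if $f(i)\le m$ and $f'(i)=0$ otherwise. Then $f'$ is a total function with $f'(i)\le m$ for every $i\in M$, so it is bounded; and $f'$ is $\Delta_1^0$-definable from $f$ and $m$, so $\Delta_1^0$-comprehension in $(M,\mathcal{X})\models\RCA_0$ ensures $f'\in\f$. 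Since $\{i\in M\mid f(i)=f'(i)\}\supseteq U\in\u$, we have $f\sim f'$ and hence $[f]=[f']$ in $\ultp$.

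The second step is to invoke additivity: since $f'\in\f$ is bounded, there is $c\in M$ with $\{i\in M\mid f'(i)=c\}\in\u$, that is, $f'\sim c$. Therefore $[f]=[f']=[c]$, which lies in the copy of $M$ inside $\ultp$, as required. By the reduction above, every element of $\ultp\setminus M$ exceeds every element of $M$, so $\ultp$ is an end extension of $M$.

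I do not expect a genuine obstacle here. The only point requiring care is checking that the truncated function $f'$ really belongs to $\f$, which is exactly where $\RCA_0$ (specifically $\Delta_1^0$-comprehension) is used; the definitions of $\sim$, of the ordering on $\ultp$, and of additivity then combine formally. If one worries about the precise meaning of ``bounded'' in the definition of additivity, note that $f'$ is bounded by $m$ in the strongest sense — pointwise, by an element of $M$ — so additivity applies under any reasonable reading.
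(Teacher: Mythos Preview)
Your proof is correct and follows essentially the same approach as the paper: truncate $f$ to a bounded representative $f'$ (the paper calls it $g$), observe $[f]=[f']$, and apply additivity to get $[f']=c$ for some $c\in M$. You are simply more explicit about the use of $\Delta_1^0$-comprehension to ensure $f'\in\f$, a point the paper leaves implicit.
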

\begin{proof}
    If $\ultp\models [f]<b$ for some $b\in M$, then we may define 
    \[g(i)=\begin{cases}
			0, & \text{if }f(i)\geq b.\\
            f(i), & \text{if }f(i)<b.
		 \end{cases}\]
$[g]=[f]$ and $g$ is a total bounded function in $\mathcal{X}$. The additiveness of $\u$ implies that there is $c\in M$ such that $\{i\in M\mid g(i)=c\}\in\u$, that is, $\ultp\models [g]=[f]=c$.
\end{proof}
The following lemma and corollary enable us to transfer the comprehension in $(M,\mathcal{X})$ into the ultrapower via $\Sigma_2^0$-elementarity, and reduce the case of $\bd\Sigma_{n+2}$ to $\bd\Sigma_2^0$ uniformly in the construction of our main result.

\begin{lem}~\label{lem:groundCA}
    For each $n\geq 1$, if $(M,\mathcal{X})$ satisfies $\Sigma_n$-comprehension, then each instance of $\Sigma_n$- and $\Pi_n$-comprehension in $(M,\mathcal{X})$ transfers to $(\ultp,\mathcal{X})$. Formally speaking, for any first-order formula $\phi(x)$ in $\Sigma_n(M)$ or $\Pi_n(M)$, if there is some $A\in\mathcal{X}$ such that
    \[(M,\mathcal{X})\models\fa x(x\in A\leftrightarrow \phi(x)),\]
    then $(\ultp,\mathcal{X})\models\fa x(x\in A\leftrightarrow \phi(x))$ as well.
\end{lem}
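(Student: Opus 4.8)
The plan is to argue by induction on $n$, proving the statement in the mildly more convenient form in which $\phi$ is allowed to have any finite tuple of free variables $\ov x$ (write $\ov x\in A$ for $\tuple{\ov x}\in A$ under the usual coding of tuples). Fix $(M,\mathcal{X})$ as in the statement. The whole argument runs on Corollary~\ref{cor:2ndele}: since $(M,\mathcal{X})\elemsub_{\Sigma_2^0}(\ultp,\mathcal{X})$, both $\Sigma_2^0$- and $\Pi_2^0$-sentences with parameters from $M$ and $\mathcal{X}$ are absolute between $(M,\mathcal{X})$ and $(\ultp,\mathcal{X})$. The point that makes the induction go is that, once the innermost part of the formula under consideration is absorbed into a second-order parameter, the comprehension instance for it becomes exactly such a $\Pi_2^0$-sentence.

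For the base case $n=1$: if $\phi(\ov x)=\ex y\theta(\ov x,y)\in\Sigma_1(M)$ with $\theta\in\Delta_0(M)$ and $A\in\mathcal{X}$ with $(M,\mathcal{X})\models\fa{\ov x}(\ov x\in A\nsc\phi(\ov x))$, then the matrix $\ov x\in A\nsc\ex y\theta(\ov x,y)$ is logically equivalent to a conjunction of a $\Sigma_1^0$- and a $\Pi_1^0$-formula, hence to a $\Pi_2^0$-formula; so the sentence $\fa{\ov x}(\ov x\in A\nsc\phi(\ov x))$, being this matrix prefixed by one universal first-order quantifier, is (logically equivalent to) a $\Pi_2^0$-sentence with parameters from $M\cup\mathcal{X}$, and Corollary~\ref{cor:2ndele} gives $(\ultp,\mathcal{X})\models\fa{\ov x}(\ov x\in A\nsc\phi(\ov x))$. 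The case $\phi\in\Pi_1$ is the same computation after writing $\phi(\ov x)=\fa y\theta(\ov x,y)$ with $\theta\in\Delta_0(M)$.

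For the inductive step: assume the statement at $n$, and suppose $(M,\mathcal{X})$ satisfies $\Sigma_{n+1}$-comprehension, so that it also satisfies $\Sigma_n$- and $\Pi_n$-comprehension and the induction hypothesis applies to it. Given an instance of $\Sigma_{n+1}$-comprehension, write $\phi(\ov x)=\ex y\theta(\ov x,y)$ with $\theta\in\Pi_n(M)$ and let $A\in\mathcal{X}$ code $\phi$ in $(M,\mathcal{X})$. Using $\Pi_n$-comprehension in $(M,\mathcal{X})$, pick $B\in\mathcal{X}$ with $(M,\mathcal{X})\models\fa{\ov x}\fa y\,(\tuple{\ov x,y}\in B\nsc\theta(\ov x,y))$; this is itself a $\Pi_n$-comprehension instance, so by the induction hypothesis the same $B$ also codes $\theta$ over $(\ultp,\mathcal{X})$. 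In $(M,\mathcal{X})$ we then have $\phi(\ov x)\nsc\ex y(\tuple{\ov x,y}\in B)$, so
\[(M,\mathcal{X})\models\fa{\ov x}\bigl(\ov x\in A\nsc\ex y(\tuple{\ov x,y}\in B)\bigr).\]
Exactly as in the base case, this sentence is (logically equivalent to) a $\Pi_2^0$-sentence with parameters from $M\cup\mathcal{X}$, so by Corollary~\ref{cor:2ndele} it holds in $(\ultp,\mathcal{X})$ as well; and over $(\ultp,\mathcal{X})$ we have $\ex y(\tuple{\ov x,y}\in B)\nsc\phi(\ov x)$ because $B$ codes $\theta$ there. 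Hence $(\ultp,\mathcal{X})\models\fa{\ov x}(\ov x\in A\nsc\phi(\ov x))$. An instance of $\Pi_{n+1}$-comprehension is treated identically, writing $\phi(\ov x)=\fa y\theta(\ov x,y)$ with $\theta\in\Sigma_n(M)$, taking $B\in\mathcal{X}$ coding $\theta$ by $\Sigma_n$-comprehension, and invoking the $\Sigma_n$-case of the induction hypothesis for the transfer of $B$.

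The one spot where something could go wrong is precisely what the induction is set up to handle: for $n\geq 1$ an instance of $\Sigma_{n+1}$-comprehension is not itself a $\Pi_2^0$-sentence, so its $\Pi_n$-matrix must first be absorbed into a second-order parameter $B$, and this substitution is faithful over $(\ultp,\mathcal{X})$ only because the induction hypothesis already supplies the transfer of the $\Pi_n$-comprehension instance defining $B$. Beyond that there is little to do: the only set produced besides the given $A$ is the code $B$, obtained from $\Sigma_n$- (equivalently $\Pi_n$-) comprehension in $(M,\mathcal{X})$, and the base case is immediate because there the comprehension instance is already, up to logical equivalence, a $\Pi_2^0$-sentence.
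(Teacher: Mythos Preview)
Your proof is correct and follows essentially the same approach as the paper: induct on the complexity level, handle the base case by observing that the $\Sigma_1$/$\Pi_1$ comprehension instance is already $\Pi_2^0$ and hence transfers by Corollary~\ref{cor:2ndele}, and in the inductive step absorb the inner $\Pi_n$ (resp.\ $\Sigma_n$) matrix into an auxiliary set $B$, apply the induction hypothesis to $B$, and then transfer the $\Pi_2^0$ relationship between $A$ and $B$. The only cosmetic difference is that the paper fixes $n$ once and inducts on $k\leq n$, whereas you induct on $n$ itself; both organizations work since $\Sigma_{n+1}$-comprehension implies $\Sigma_n$-comprehension.
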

\begin{proof}
    We prove the statement for all the $\phi(x)$ in $\Sigma_k(M)$ and $\Pi_k(M)$ simultaneously by induction on $k\leq n$.
    For $k=1$, let $\phi(x)$ be any formula in $\Sigma_1(M)$ or $\Pi_1(M)$, then $\fa x(x\in A\leftrightarrow \phi(x))$ is a $\Pi_2^0(M,\mathcal{X})$-formula, so the same holds in $(\ultp,\mathcal{X})$ by Corollary~\ref{cor:2ndele}. For the induction step, suppose the statement holds for all the formulas in $\Sigma_k(M)$ and $\Pi_k(M)$, and take any $\phi(x):=\ex y\psi(x,y)\in\Sigma_{k+1}(M)$ where $\psi(x,y)\in \Pi_k(M)$. By $\Sigma_n$-comprehension in $(M,\mathcal{X})$, there exist $A,B\in \mathcal{X}$ such that
    \[(M,\mathcal{X})\models\fa x(x\in A\leftrightarrow \ex y\psi(x,y)),\]
    \[(M,\mathcal{X})\models\fa {\langle x,y\rangle}(\langle x,y\rangle \in B\leftrightarrow \psi(x,y)).\]
    By induction hypothesis, the second clause transfers to $(\ultp,\mathcal{X})$. We also have the following relation of $A$ and $B$ from the two statements above:
    \[(M,\mathcal{X})\models \fa x(x\in A\leftrightarrow \ex y\langle x,y\rangle \in B).\]
    This fact also transfers to $(\ultp,\mathcal{X})$ by Corollary~\ref{cor:2ndele}. 
    Putting the two statements in $(\ultp,\mathcal{X})$ together, we have
    \[(\ultp,\mathcal{X})\models\fa x(x\in A\leftrightarrow \ex y\psi(x,y)).\]
    The case for $\phi(x)\in\Pi_{k+1}(M)$ is exactly the same. This completes the induction.
\end{proof}

\begin{cor}~\label{cor:1stele}
    If $(M,\mathcal{X})$ satisfies $\Sigma_n$-comprehension for some $n\in \IN$, then $M\elemsub_{\Sigma_{n+2}}\ultp$ as an extension between models of first-order arithmetic.
\end{cor}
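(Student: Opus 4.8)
The plan is to climb the arithmetic hierarchy one quantifier at a time, using Corollary~\ref{cor:2ndele} as the base case and Lemma~\ref{lem:groundCA} to feed the comprehension hypothesis into the inductive step. Concretely, I would prove by induction on $k$ with $0\leq k\leq n$ the statement: every $\Sigma_{k+2}(M)$-formula and every $\Pi_{k+2}(M)$-formula is absolute between $M$ and $\ultp$. The case $k=0$ is immediate: a $\Sigma_2(M)$-formula is in particular a $\Sigma_2^0(M,\mathcal{X})$-formula with no second-order quantifiers, so Corollary~\ref{cor:2ndele} applies (and $\Pi_2$ follows by taking negations, since $\Sigma_2^0$-elementarity gives $\Pi_2^0$-elementarity).

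For the inductive step, suppose all $\Sigma_{k+1}(M)$- and $\Pi_{k+1}(M)$-formulas are absolute, where $k+1\leq n$, and take $\phi(x)\equiv\ex y\psi(x,y)$ with $\psi\in\Pi_{k+1}(M)$. Since $k+1\leq n$ and $(M,\mathcal{X})$ satisfies $\Sigma_n$-comprehension, it satisfies $\Pi_{k+1}$-comprehension, so there is $B\in\mathcal{X}$ with $(M,\mathcal{X})\models\fa{\tuple{x,y}}(\tuple{x,y}\in B\leftrightarrow\psi(x,y))$. By Lemma~\ref{lem:groundCA}, this equivalence transfers to $(\ultp,\mathcal{X})$, so inside $\ultp$ the formula $\phi(x)$ is equivalent to $\ex y\tuple{x,y}\in B$, which is $\Sigma_1^0(M,\mathcal{X})$; the same rewriting works in $M$. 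Thus it suffices to check that $\ex y\tuple{x,y}\in B$ is absolute. For $M\elemsub\ultp$ on $\Sigma_1^0$-formulas this is half of Corollary~\ref{cor:2ndele} (the forward direction of $\Sigma_2^0$-elementarity applied to an $\exists$-over-$\Delta_0^0$ formula); alternatively one reads it off directly from restricted \L o\'s's theorem, since a true $\Sigma_1^0$ statement about $\ov{[f]}$ with constant $f_i=i$-style witnesses transfers, and conversely a witness $[g]$ in $\ultp$ gives a set in $\u$ of witnesses in $M$, at least one of which (as $\u$-elements are nonempty) lies in $M$. The $\Pi_{k+2}$ case is handled symmetrically: write $\phi(x)\equiv\fa y\psi(x,y)$ with $\psi\in\Sigma_{k+1}(M)$, use $\Sigma_{k+1}$-comprehension to capture $\psi$ by a set $B\in\mathcal{X}$, transfer via Lemma~\ref{lem:groundCA}, and reduce to absoluteness of $\fa y\tuple{x,y}\in B$, a $\Pi_1^0(M,\mathcal{X})$-formula, which follows from $\Sigma_2^0$-elementarity by negation. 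Taking $k=n$ gives $M\elemsub_{\Sigma_{n+2}}\ultp$.

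The main point requiring care — more bookkeeping than genuine obstacle — is ensuring the comprehension hypothesis is strong enough at every level of the induction: we only ever invoke $\Sigma_j$- or $\Pi_j$-comprehension for $j\leq n$, which is exactly what $\Sigma_n$-comprehension supplies, so the induction never outruns its hypothesis. One should also be slightly careful that the reduction "$\phi$ is equivalent to a $\Sigma_1^0$ formula" is performed using the \emph{same} defining set $B$ on both sides, so that the equivalence is a single transferable sentence rather than two separately-verified facts; Lemma~\ref{lem:groundCA} is phrased precisely to make this legitimate. Everything else is routine: pushing negations through to convert $\Sigma$-elementarity into $\Pi$-elementarity, and unwinding $\tuple{\cdot,\cdot}$ coding, which is available since $(M,\mathcal{X})\models\RCA_0\supseteq\ind\Sigma_1^0$.
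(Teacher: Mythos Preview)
Your inductive step peels off a single quantifier and invokes $\Pi_{k+1}$-comprehension to reduce a $\Sigma_{k+2}$-formula to a $\Sigma_1^0$-formula. As you yourself note, this requires $k+1\leq n$, so the induction only runs through $k\leq n-1$ and delivers $\Sigma_{n+1}$-elementarity, not $\Sigma_{n+2}$. The closing sentence ``Taking $k=n$'' is precisely the case your own constraint forbids: with only $\Sigma_n$-comprehension you cannot code a $\Pi_{n+1}$-matrix by a set in $\mathcal{X}$, and there is no way around this using your one-quantifier-at-a-time scheme. (A related symptom: your inductive hypothesis is never actually used in the inductive step, since the reduction to $\Sigma_1^0$ goes through Corollary~\ref{cor:2ndele} directly --- so the ``induction'' is really a list of independent claims, and the list stops one short.)

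The fix is to peel off \emph{two} quantifiers rather than one, thereby exploiting the full strength of $\Sigma_2^0$-elementarity. Write a $\Pi_{n+2}(M)$-sentence as $\fa x\ex y\psi(x,y)$ with $\psi\in\Pi_n(M)$, apply $\Pi_n$-comprehension to obtain $A\in\mathcal{X}$ coding $\psi$, and reduce to the $\Pi_2^0$-sentence $\fa x\ex y\,\tuple{x,y}\in A$, which transfers by Corollary~\ref{cor:2ndele}; Lemma~\ref{lem:groundCA} ensures the same $A$ still codes $\psi$ in $(\ultp,\mathcal{X})$. This is exactly the paper's proof, and once you use both outer quantifiers this way the induction becomes unnecessary: the top level is handled in one shot, and the lower levels follow since $\Pi_k\subseteq\Pi_n$ for $k\leq n$.
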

\begin{proof}
    Let $M\models \fa x \ex y\psi(x,y)$ for some $\psi(x,y)\in \Pi_n(M)$. Take $A\in\mathcal{X}$ such that 
    \[(M,\mathcal{X})\models\fa {\langle x,y\rangle}(\langle x,y\rangle\in A\leftrightarrow \psi(x,y)),\]
    \[(M,\mathcal{X})\models \fa x\ex y\langle x,y\rangle\in A.\]
    By Lemma~\ref{lem:groundCA} and Corollary~\ref{cor:2ndele}, both formulas hold in $(\ultp,\mathcal{X})$, which implies $(\ultp,\mathcal{X})\models \fa x \ex y\psi(x,y)$.
\end{proof}

\section{Constructions of end extensions}
In this section, we present the constructions of end extensions by the second-order ultrapower.
We first answer Question~\ref{question:kc} affirmatively. In view of Corollary~\ref{cor:1stele}, we only need to deal with the case for the base level in the second-order context.
\begin{thm}\label{thm:main}
    For any countable model $(M,\mathcal{X})\models\bd\Sigma_2^0+\WKL_0$, there is a proper end extension $(M,\mathcal{X})\elemsub_{\ee,\Sigma_2^0}(K,\mathcal{X})\models\bd\Sigma_1^0$.
\end{thm}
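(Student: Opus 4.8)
The plan is to construct the ultrafilter $\u$ on $\mathcal{X}$ by a countable back-and-forth argument, so that the resulting second-order restricted ultrapower $(\ultp,\mathcal{X})$ serves as $(K,\mathcal{X})$. By Corollary~\ref{cor:2ndele} it is automatically $\Sigma_2^0$-elementary over $(M,\mathcal{X})$ regardless of $\u$, and by Lemma~\ref{lem:additive} it will be a proper end extension provided $\u$ is additive. So the two things I must engineer into $\u$ during the construction are: \emph{additivity}, and enough structure to force $(\ultp,\mathcal{X})\models\bd\Sigma_1^0$.

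First I would set up the enumeration. Since $(M,\mathcal{X})$ is countable, list all total functions $\f=\{f_0,f_1,\dots\}$ in $\mathcal{X}$ and all sets in $\mathcal{X}$; also list all instances of $\bd\Sigma_1^0$ with parameters from $\mathcal{X}$ and from the (countably many) potential ultrapower elements $[f_j]$. I build $\u$ as an increasing union of sets $A_0\supseteq A_1\supseteq\cdots$ in $\mathcal{X}$, each cofinal in $M$ (equivalently each infinite, after identifying cofinal sets with unbounded ones), maintaining the finite-intersection/cofinality property; $\u$ is the filter generated by the $A_s$. At stage handling $f_j$: if $f_j$ is bounded on some $A_s$, use $\bd\Sigma_1^0$ (hence $\ind\Delta_0$, which gives a least-element / pigeonhole argument on the bounded range) together with $\Delta_1^0$-comprehension to find a value $c$ with $\{i\in A_s: f_j(i)=c\}$ still cofinal, and pass to that set; this secures additivity. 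At a stage deciding whether $A\in\u$, shrink to $A$ or to its complement, whichever is cofinal — at least one is. This part is the standard restricted-ultrafilter bookkeeping and is routine.

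The substantive stage is the one that handles an instance of $\bd\Sigma_1^0$ in the ultrapower, say $\falt{x}{[g]}\ex{y}\theta(x,y)\to\ex{b}\falt{x}{[g]}\exlt{y}{b}\theta(x,y)$ with $\theta\in\Sigma_0^0$ over $(M,\mathcal{X})$ and parameters that are ultrapower elements $[g],[h_1],\dots$. The goal is to choose the next $A_{s+1}\subseteq A_s$ so that this instance is decided correctly in $(\ultp,\mathcal{X})$. Here I would exploit $\WKL_0$: for each $i\in A_s$, consider in $(M,\mathcal{X})$ the question of whether $\falt{x}{g(i)}\ex{y}\theta(x,y,\dots)$ holds and, if so, produce a bound $b(i)$; the point is that $(M,\mathcal{X})\models\bd\Sigma_1^0$, so for each individual $i$ such a bound \emph{exists} in $M$, and using $\WKL_0$ (a $\Sigma_0(\Sigma_1^0)$ selection / bounded-König argument, as in the standard proof that $\WKL_0$ lets one uniformize $\Pi_1^0$ matters over a bounded domain) I can find a \emph{single function} $i\mapsto b(i)$ in $\mathcal{X}$ that works on a cofinal subset $A_{s+1}$ of $A_s$. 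Then by the restricted \L o\'s theorem (Theorem~\ref{thm:los}), $[b]$ witnesses the conclusion of the collection instance in $(\ultp,\mathcal{X})$, while the premise transfers the same way. I expect this WKL-driven uniformization step — getting one set-sized bounding function out of the pointwise bounds, on a cofinal set, while respecting all previously committed constraints — to be the main obstacle, and the place where $\WKL_0$ (rather than mere $\bd\Sigma_2^0$ of the first-order part) is genuinely used.

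Finally I would check that nothing collides: additivity stages, set-deciding stages, and collection stages each only shrink $A_s$ to a cofinal subset, so the construction runs through all $\omega$ stages, $\u$ is a cofinal ultrafilter on $\mathcal{X}$ that is additive, and every $\bd\Sigma_1^0$ instance has been handled. Then $(K,\mathcal{X}):=(\ultp,\mathcal{X})$ is a proper $\Sigma_2^0$-elementary end extension of $(M,\mathcal{X})$ satisfying $\bd\Sigma_1^0$, as required. I would remark that the first-order reduct then witnesses, via Corollary~\ref{cor:1stele}, the full Kaufmann--Clote statement once combined with the $\WKL_0$-expansion theorem of H\'ajek cited in Section~2.
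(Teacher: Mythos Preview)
Your overall architecture matches the paper's second proof --- a second-order restricted ultrapower with stages for additivity and for $\bd\Sigma_1^0$ --- but your collection stage has a gap, and the role of $\WKL_0$ is inverted. When the antecedent $\falt{x}{g(i)}\ex{y}\theta(x,y,f(i))$ holds for cofinally many $i\in A_s$ you do not need $\WKL_0$ at all: over $\bd\Sigma_1^0$ this is equivalent to the $\Sigma_1^0$ statement $\ex b\,\falt{x}{g(i)}\exlt{y}{b}\theta(x,y,f(i))$, so one simply passes to a cofinal $\mathcal{X}$-subset of this $\Sigma_1^0$ set and the consequent holds in the ultrapower by restricted \L o\'s for $\Sigma_1^0$ (equivalently, the least bound $b(i)$ is determined by a $\Sigma_0^0$ condition and is already $\Delta_1^0$). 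The case you have not handled is when that $\Sigma_1^0$ set is \emph{bounded} in $M$: then for all large $i\in A_s$ the antecedent fails, there is no $b(i)$, and no cofinal $A_{s+1}$ supports your plan. Here one must instead force the antecedent to fail in $(\ultp,\mathcal{X})$, i.e.\ exhibit some $[P]<[g]$ with $(\ultp,\mathcal{X})\models\fa z\neg\theta([f],[P],z)$. For each such $i$ there is $y<g(i)$ with $\fa z\neg\theta(f(i),y,z)$ --- a $\Pi_1^0$ condition on $y$ with the explicit bound $g(i)$ --- and \emph{this} is where $\WKL_0$ enters, via the bounded $\Pi_1^0$-selection Lemma~\ref{lem:RPi_1^-}, to produce a single $P\in\mathcal{X}$. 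So the correct dichotomy at a collection stage is: either shrink to the $\Sigma_1^0$ set where the consequent already holds pointwise, or leave $A_s$ unchanged and, in the verification, use $\WKL_0$ to refute the antecedent.

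A smaller slip: the additivity stage requires $\bd\Sigma_2^0$, not $\bd\Sigma_1^0$. Finding $c$ with $\{i\in A_s:h(i)=c\}$ cofinal is an instance of regularity for a $\Delta_0^0$ formula; $\ind\Delta_0$ pigeonhole on a bounded range does not by itself guarantee cofinality of any single fibre. The hypothesis $(M,\mathcal{X})\models\bd\Sigma_2^0$ is exactly what is used here.
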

We will give two proofs: The first proof is suggested by Tin Lok Wong. We ensure that the second-order ultrapower satisfies $\bd\Sigma_1^0$ by properly embedding it into a coded ultrapower as an initial segment;
the second proof guarantees $\bd\Sigma_1^0$ directly in the construction of the ultrafilter.
$\WKL_0$ plays a central role in both constructions.
\begin{proof}
    Following the construction in~\cite[Theorem 2]{incoll:PKcodedultp}, by an iterated arithmetic completeness theorem within $(M,\mathcal{X})$, there is an end extension $M\subseteq_{\ee}L\models \ind\Delta_0$ such that $\mathcal{X}=\SSy_M(L)$.\par
    We build a coded ultrapower with respect to both $M$ and $L$. Let
    \[\g=\{g\in L\mid g\text{ codes a total function from $M$ to $L$}\},\]
    \[\f=\{f\in L\mid f \text{ codes a total function from $M$ to $M$}\},\]
    and $\u$ be an ultrafilter on $\mathcal{X}$. Following the construction by Paris and Kirby~\cite[Theorem 3]{incoll:PKinitial}, let $\g/\u$ be the coded ultrapower with respect to $\g$ and $\u$.
    Similar to the second-order ultrapower, coded ultapowers also satisfy restricted \L o\'s's theorem for $\Delta_0$-formulas and thus $L\elemsub_{\Delta_0}\g/\u$.
    Moreover, $\g/\u$ is a cofinal extension of $L$, as each element $[g]\in\g/\u$ is bounded by its code $g\in L$.
    So $L\elemsub_{\Sigma_1}\g/\u$ and in particular, $\g/\u\models\ind\Delta_0$.
    Since $\f\subseteq\g$, $\ultp$ is a substructure of $\g/\u$.
    On the other hand, since $\mathcal{X}=\SSy_M(L)$, $\ultp$ is isomorphic to a second-order ultrapower with respect to $\mathcal{X}$ and $\u$, so $(M,\mathcal{X})\elemsub_{\ee,\Sigma_2^0}(\ultp,\mathcal{X})$.
    
    We want to pick a sufficiently generic ultrafilter $\u$, such that both $M$ and $\ultp$ are proper initial segments of $\mathcal{G}/\u$.
    We construct $\u$ in countably many stages. For each $k\in\IN$ we construct some $A_k\in\mathcal{X}$ that is cofinal in $M$, and $A_k\supseteq A_{k+1}$.
    We enumerate all the pairs $\langle f,g\rangle$ such that $f\in \f$ and $g\in\mathcal{G}$ as $\{\langle f_k,g_k\rangle\}_{k\in\IN}$, and all the functions in $\f$ bounded by some $b\in M$ as $\{h_k\}_{k\in\IN}$.\par
    \textbf{Stage $\boldsymbol{0}$}: Let $A_0=M$.\par
    \textbf{Stage $\boldsymbol{2k+1}$ }($\ultp\subseteq_\ee\g/\u$): Consider 
    \[A=A_{2k}\cap\{x\in M\mid L\models g_k(x)<f_k(x)\}.\]
    Since $L\models\ind\Delta_0$, $A\in\SSy_M(L)=\mathcal{X}$.
    If $A$ is cofinal in $M$, then let $A_{2k+1}=A$. Otherwise let $A_{2k+1}=A_{2k}$ and move on to the next stage.\par
    \textbf{Stage $\boldsymbol{2k+2}$ }($M\subseteq_\ee\ultp$): Assume $h_k$ is bounded by $b\in M$.
    Then
    \[(M,\mathcal{X})\models\excf x\exlt y b(x\in A_{2k+1}\wedge h_k(x)=y).\]
    Since $(M,\mathcal{X})\models\bd\Sigma_2^0$, there is some $c<b$ such that 
    \[\{x\in M\mid (M,\mathcal{X})\models x\in A_{2k+1}\wedge h_k(x)=c\}\]
    is cofinal in $M$. Let this set be $A_{2k+2}$ and move on to the next stage.\par
    Finally, let $\u=\{A\in \mathcal{X}\mid \exin k {\IN} A_{k}\subseteq A\}$. It is not hard to see that $\u$ is an ultrafilter and each element of $\u$ is cofinal in $M$. This completes the construction of $\u$.\par
    \textbf{Verification}: We verify that $M\subseteq_{\ee}\ultp\subseteq_{\ee}\g/\u$. For $\ultp\subseteq_{\ee}\g/\u$, fix any $g\in\g$.
    For each $f\in \f$, take $k\in\IN$ such that $\langle f_k,g_k\rangle=\langle f,g\rangle$.
    At stage $2k$, if 
    \[A=A_{2k}\cap\{x\in M\mid g(x)<f(x)\}\]
    is cofinal in $M$, then $\{x\in M\mid g(x)<f(x)\}\in\u$.
    Let $\hat{g}\in\mathcal{G}$ be defined by 
    \[\hat{g}(x)=\begin{cases}
			g(x), & \text{if }g(x)< f(x).\\
            0, & \text{if }g(x)\geq f(x).
		 \end{cases}\]
   Then $[\hat{g}]\in \ultp$ and $[g]=[\hat{g}]$ in $\g/\u$. Otherwise, if $A$ is bounded in $M$, then we are forced to have $\{x\in M\mid g(x)\geq f_k(x)\}\in \u$. By the restricted \L o\'s's theorem in $\g/\u$, $\g/\u\models [g]>[f_k]$. So $\ultp$ is an proper initial segment of $\g/\u$, which implies $(\ultp,\SSy_{\ultp}(\g/\u))\models\bd\Sigma_1^0$.
   
   For $M\subseteq_{\ee}\ultp$, suppose $h\in\f$ is bounded. Then, take $k\in\IN$ such that $h=h_k$, and at stage $2k+2$ the choice of $A_{2k+2}$ forces $\{x\in M\mid (M,\mathcal{X})\models h(x)=c\}\in\u$ for some $c\in M$. So $\u$ is additive with respect to $\f$, and $M\subseteq_{\ee}\ultp$ by Lemma~\ref{lem:additive}.\par
   For each $A\in\mathcal{X}$, let $a\in L$ be the element that codes $A\subseteq M$. By the restricted \L o\'s's theorem for $\Delta_0$-formulas in both $\ultp$ and $\g/\u$, it is not hard to prove that for each $f\in\f$, 
    \[(\ultp,\mathcal{X})\models [f]\in A\iff(\g/\u,\mathcal{X})\models [f]\in a.\]
    So we may embed the second-order part $\mathcal{X}$ of $(\ultp,\mathcal{X})$ into $\SSy_{\ultp}(\g/\u)$.
    Since $(\ultp,\SSy_{\ultp}(\g/\u))\models\bd\Sigma_1^0$, we have $(\ultp,\mathcal{X})\models\bd\Sigma_1^0$.
\end{proof}
\begin{thm}\label{thm:kaufmann-clote}
    For each $n\in \IN$ and any countable model $M\models\bd\Sigma_{n+2}$, there is a $\Sigma_{n+2}$-elementary proper end extension $M\elemsub_{\ee,\Sigma_{n+2}}K\models \bd\Sigma_{n+1}$.
\end{thm}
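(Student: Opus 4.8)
The plan is to reduce Theorem~\ref{thm:kaufmann-clote} to Theorem~\ref{thm:main} by lifting the model $M$ into the second-order world and exploiting the collection-to-comprehension correspondence together with Corollary~\ref{cor:1stele}. First I would recall the standard fact of H\'ajek--Paris type cited in the Preliminaries: since $M\models\bd\Sigma_{n+2}$ is countable, it has a countable $\omega$-expansion $(M,\mathcal{X})$ satisfying $\WKL_0+\bd\Sigma_{n+2}^0$. The key point is that under $\bd\Sigma_{n+2}^0$ (equivalently, together with $\Delta_1^0$-comprehension and $\ind\Sigma_1^0$ from $\RCA_0$) one has $\Sigma_n^0$-comprehension available in $(M,\mathcal{X})$ — this is the second-order analogue of the equivalence $\bd\Sigma_{n+2}\leftrightarrow\text{``}\Sigma_n\text{-separation''}$-style results, and I would either cite it or sketch that $\bd\Sigma_{n+2}^0$ proves every $\Sigma_{n+1}^0$-definable set that is "bounded in complexity" has a code, in particular every $\Sigma_n^0$ (and $\Pi_n^0$) set is coded. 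Actually the cleanest route is: $\bd\Sigma_{n+2}^0$ implies $\ind\Sigma_{n+1}^0$ fails in general but does imply $\ind\Sigma_n^0$, and more to the point it implies $\Sigma_n^0$-comprehension via the usual bounded-collection argument (a $\Sigma_n^0$ set restricted below any $b$ is $\Delta_0^0(\Sigma_n^0)$-coded by $\bd\Sigma_1^0$-style reasoning, and then one collects the codes). So $(M,\mathcal{X})$ satisfies the hypothesis of Corollary~\ref{cor:1stele} with parameter $n$.

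The next step is to run the construction of Theorem~\ref{thm:main} on $(M,\mathcal{X})$: this yields an ultrafilter $\u$ on $\mathcal{X}$, all of whose elements are cofinal, which is additive with respect to $\f$, and such that the second-order restricted ultrapower $(K,\mathcal{X}):=(\ultp,\mathcal{X})$ is a proper end extension of $(M,\mathcal{X})$ with $(M,\mathcal{X})\elemsub_{\ee,\Sigma_2^0}(K,\mathcal{X})$ and $(K,\mathcal{X})\models\bd\Sigma_1^0$. Note Theorem~\ref{thm:main} is stated for $(M,\mathcal{X})\models\bd\Sigma_2^0+\WKL_0$, and $\bd\Sigma_{n+2}^0$ implies $\bd\Sigma_2^0$, so the hypothesis is met regardless of $n$. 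Now I combine two facts about the same structure $(K,\mathcal{X})$: on one hand Corollary~\ref{cor:1stele}, using $\Sigma_n$-comprehension in $(M,\mathcal{X})$, gives $M\elemsub_{\Sigma_{n+2}}K$ as first-order structures (and the end-extension property is already recorded via Lemma~\ref{lem:additive}, since $\u$ is additive with respect to $\f$); on the other hand $(K,\mathcal{X})\models\bd\Sigma_1^0$ forces $K\models\bd\Sigma_{n+1}$. For this last implication I would use: since the second-order part $\mathcal{X}$ of $(K,\mathcal{X})$ contains codes for all $\Sigma_n(K)$ and $\Pi_n(K)$ sets — this is exactly the content of Lemma~\ref{lem:groundCA}, which transfers $\Sigma_n$- and $\Pi_n$-comprehension from $(M,\mathcal{X})$ to $(K,\mathcal{X})$ — every first-order $\Sigma_{n+1}(K)$ formula $\exists y\,\psi(x,y,\bar a)$ with $\psi\in\Pi_n$ can be rewritten, using the $\Pi_n$-code $B$ for $\psi$, as the $\Sigma_1^0$ formula $\exists y\,\langle x,y\rangle\in B$; then an instance of $\bd\Sigma_{n+1}$ in $K$ becomes an instance of $\bd\Sigma_1^0$ in $(K,\mathcal{X})$, which holds. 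Hence $K\models\bd\Sigma_{n+1}$, and $M\elemsub_{\ee,\Sigma_{n+2}}K\models\bd\Sigma_{n+1}$ as required.

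I would organize the write-up as: (1) pass to $(M,\mathcal{X})\models\WKL_0+\bd\Sigma_{n+2}^0$; (2) observe $\bd\Sigma_{n+2}^0\vdash\Sigma_n^0\text{-CA}$; (3) apply Theorem~\ref{thm:main} to get $(K,\mathcal{X})\models\bd\Sigma_1^0$ with the stated end-elementary embedding; (4) apply Corollary~\ref{cor:1stele} for the $\Sigma_{n+2}$-elementarity of $M\subseteq K$; (5) use Lemma~\ref{lem:groundCA} to bootstrap $\bd\Sigma_1^0$ in $(K,\mathcal{X})$ up to $\bd\Sigma_{n+1}$ in $K$. The main obstacle I anticipate is step (2): verifying carefully that $\bd\Sigma_{n+2}^0$ (over $\RCA_0$) really does give $\Sigma_n^0$-comprehension with parameters in the exact form that Lemma~\ref{lem:groundCA} and Corollary~\ref{cor:1stele} demand. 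The first-order analogue ($\bd\Sigma_{n+2}$ proves $\Delta_0(\Sigma_{n})$ sets bounded below any element are coded, hence $\Sigma_n$-definable sets of the model are "locally coded") is standard, but transcribing it to the two-sorted setting and making sure the codes land in $\mathcal{X}$ — not merely in some definable class — needs the coding power of $\exp$ (available from $\ind\Delta_0^0$ within $\RCA_0$) plus $\bd\Sigma_1^0$; I would state this as a lemma and either cite H\'ajek--Pudl\'ak/Simpson or give the short bounded-collection proof. Everything else is assembly of results already in the excerpt.
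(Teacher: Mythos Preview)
Your overall architecture is exactly the paper's: reduce to Theorem~\ref{thm:main}, invoke Corollary~\ref{cor:1stele} for $\Sigma_{n+2}$-elementarity, and use Lemma~\ref{lem:groundCA} to lift $\bd\Sigma_1^0$ in the ultrapower to $\bd\Sigma_{n+1}$ in its first-order reduct. Steps~(3)--(5) are correct and match the paper verbatim.

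Your instinct that step~(2) is the obstacle is right, but the problem is not one of careful verification: the claim is simply false. Over $\RCA_0$, $\bd\Sigma_{n+2}^0$ does \emph{not} imply $\Sigma_n$-comprehension (even for parameter-free first-order $\Sigma_n$) once $n\geq 1$. For a concrete counterexample, take any $\omega$-model of $\WKL_0$ in which every set is low---these exist by the low basis theorem. Being an $\omega$-model it satisfies $\bd\Sigma_k^0$ for every $k$, yet $0'\notin\mathcal{X}$, so first-order $\Sigma_1$-comprehension fails. Your sketched argument ``a $\Sigma_n^0$ set restricted below any $b$ is coded, then collect the codes'' breaks precisely at the collecting step: the bounded codes are first-order elements indexed by all of $M$, and no amount of collection assembles an unbounded family of such codes into a single element of~$\mathcal{X}$.

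The fix, which is what the paper does, is to reverse the order of your steps~(1)--(2): do not try to \emph{derive} $\Sigma_n$-comprehension from a collection scheme, but \emph{build it into} the initial expansion. From $M\models\bd\Sigma_{n+2}$ one first lets $\mathcal{X}_0$ consist of the $\Sigma_n$-definable (more carefully, the $\Delta_{n+1}$-definable) subsets of $M$; then $(M,\mathcal{X}_0)$ satisfies $\Sigma_n$-comprehension by construction, and it satisfies $\bd\Sigma_2^0$ because a $\Sigma_2^0$-formula with $\Delta_{n+1}$-definable set parameters unravels to a first-order $\Sigma_{n+2}$-formula. Only then does one $\omega$-extend to $\WKL_0+\bd\Sigma_2^0$; since the extension only adds sets, $\Sigma_n$-comprehension survives. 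Note that Theorem~\ref{thm:main} needs only $\bd\Sigma_2^0$, not the $\bd\Sigma_{n+2}^0$ you were aiming for---and indeed the latter is generally incompatible with having all $\Sigma_n$-sets in $\mathcal{X}$ unless $M$ satisfies far more collection than assumed. With this correction your steps~(3)--(5) go through unchanged.
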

\begin{proof}
    We first expand $M$ to a second-order structure satisfying $\bd\Sigma_2^0$ by adding all the $\Sigma_n$-definable sets into the second-order universe, then we further $\omega$-extend it to some countable $(M,\mathcal{X})\models\bd\Sigma_2^0+\WKL_0$. By Theorem~\ref{thm:main}, there is an ultrapower $(M,\mathcal{X})\elemsub_{\ee,\Sigma_2^0}(\ultp,\mathcal{X})$ which satisfies $\bd\Sigma_1^0$. Since all the $\Sigma_n$-definable sets of $M$ are in $\mathcal{X}$, $(M,\mathcal{X})$ satisfies $\Sigma_n$-comprehension and $M\elemsub_{\ee,\Sigma_{n+2}}\ultp$ by Corollary~\ref{cor:1stele}.
    
    For $\ultp\models\bd\Sigma_{n+1}$, suppose $\ultp\models \falt x {[g]}\ex y\theta(x,y,[f])$ for some $[g]\in\ultp$ and $\theta\in\Pi_n$ where $[f]\in\ultp$ is the only parameter in $\theta$.
    By $\Pi_n$-comprehension in $M$, let $A\in \mathcal{X}$ such that 
    \[(M,\mathcal{X})\models \fa {\langle x,y,z\rangle} (\langle x,y,z\rangle \in A\leftrightarrow\theta(x,y,z)).\]
     $(\ultp,\mathcal{X})$ satisfies the same formula by Lemma~\ref{lem:groundCA}, and thus
    \[(\ultp,\mathcal{X})\models\falt x {[g]}\ex y\langle x,y,[f]\rangle\in A.\]
    By $\bd\Sigma_1^0$ in $(\ultp,\mathcal{X})$,
    \[(\ultp,\mathcal{X})\models \ex b\falt x {[g]}\exlt y b\langle x,y,[f]\rangle \in A,\]which means $\ultp\models\ex b\falt x {[g]}\exlt y b\theta(x,y)$, so $\ultp\models\bd\Sigma_{n+1}$.
\end{proof}
Yet simple, this construction does not reveal a syntactical proof of $\bd\Sigma_{n+2}\vdash\wrd\Pi_{n+1}$. We make $(\ultp,\mathcal{X})\models\bd\Sigma_1^0$ by embedding it into a larger ultrapower $\g/\u$ as an initial segment, and the core argument is wrapped inside the construction of $\g/\u$.\par
Here we present another construction that directly guarantees each instance of $\bd\Sigma_1^0$ in the ultrapower, and hence provide more insights. The construction relies on a simple yet powerful lemma resulting from $\WKL_0$, which states that if a $\Pi_1^0$-definable bounded multi-valued function is total, then we may select a single-valued choice function of it within the second-order universe.
The lemma also leads to a syntactical proof of $\bd\Sigma_{n+2}\vdash \wrd\Pi_{n+1}$.
\begin{lem}~\label{lem:RPi_1^-}
    Fix a model $(M,\mathcal{X})\models\WKL_0$. Let $\theta(x,y,z)\in \Delta_0^0(M,\mathcal{X})$. If $(M,\mathcal{X})\models \fa x \exlt {y}{f(x)}\fa z\theta(x,y,z)$ for some total function $f\in \mathcal{X}$, then there is a total function $P\in \mathcal{X}$ such that:
        \[(M,\mathcal{X})\models\fa x(P(x)<f(x)\wedge\fa z\theta(x,P(x),z)).\]
\end{lem}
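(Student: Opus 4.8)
The plan is to obtain $P$ as an infinite path through a bounded tree in $\mathcal X$, relying on the fact (available in $\WKL_0$) that every infinite bounded tree has an infinite path. The key point is that the $\Pi_1^0$ condition $\fa z\theta(x,y,z)$ cannot be inserted into a tree directly, so the tree is built out of its \emph{finite approximations}, and one then checks that along an infinite path these approximations accumulate to the full $\Pi_1^0$ statement.

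Concretely, working inside $(M,\mathcal X)$, I would let $T$ be the set of (codes of) finite sequences $\sigma$ satisfying
\[
\fa{x<\len\sigma}\bigl(\sigma(x)<f(x)\bigr)\qand\fa{x<\len\sigma}\fa{z<\len\sigma}\theta\bigl(x,\sigma(x),z\bigr).
\]
Every quantifier here is bounded and $\theta$ is $\Delta_0^0$, so this is a $\Delta_0^0(M,\mathcal X)$ condition (with $f$ and the parameters of $\theta$), whence $T\in\mathcal X$ by $\Delta_1^0$-comprehension. Clearly $T$ is closed under initial segments, so it is a tree, and it is bounded by $f$ in the sense recalled in Section~2.

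The first substantive step is to show $T$ is infinite. Given $\ell\in M$: for each $x<\ell$ the hypothesis $\fa x\exlt y{f(x)}\fa z\theta(x,y,z)$ supplies some $y<f(x)$ with $\fa{z<\ell}\theta(x,y,z)$, and letting $\sigma(x)$ be the least such $y$ defines a $\Delta_0^0$ function of $x$ on $[0,\ell)$, which is therefore coded by an element $\sigma\in M$; by construction $\sigma\in T$ and $\len\sigma=\ell$. Only sequence coding and bounded $\Delta_0^0$-comprehension are used here, both present in $\RCA_0$, so no instance of $\bd\Sigma_1^0$ is needed. Now $T$ is an infinite bounded tree in $\mathcal X$, so it has an infinite path $P\in\mathcal X$ (\cite[Lemma IV.1.4]{book:simpson}); being an infinite path, $P$ is a total function from $M$ to $M$. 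Finally I would verify the conclusion: for each $x\in M$, reading off $P\restriction(x+1)\in T$ at the coordinate $x$ gives $P(x)<f(x)$; and for each $x,z\in M$, choosing $\ell>\max(x,z)$ and reading off $P\restriction\ell\in T$ at the coordinates $x$ and $z$ (using $(P\restriction\ell)(x)=P(x)$) gives $\theta(x,P(x),z)$. Hence $(M,\mathcal X)\models\fa x\bigl(P(x)<f(x)\wedge\fa z\theta(x,P(x),z)\bigr)$.

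I expect the only genuinely delicate point to be the infiniteness of $T$ — converting the pointwise hypothesis into an actual coded sequence of length $\ell$ for every $\ell\in M$ while staying within the coding and comprehension that $\RCA_0$ provides; the least-witness choice of $\sigma(x)$ is exactly what keeps the relevant predicate $\Delta_0^0$, so no extra collection is required. The remaining items — $T\in\mathcal X$, boundedness of $T$, and the accumulation of the finite approximations along the path into the full $\Pi_1^0$ condition — are routine.
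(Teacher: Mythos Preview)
Your proof is correct and follows essentially the same approach as the paper: build the same $\Delta_1^0$ tree of finite approximations bounded by $f$, show it is infinite by exhibiting a node of each length, and apply bounded $\WKL$. The only difference is cosmetic: when constructing a node $\sigma$ of length $\ell$, the paper picks for $\sigma(x)$ the least $y<f(x)$ satisfying the full $\Pi_1^0$ condition $\fa z\theta(x,y,z)$ (invoking $\ind\Sigma_1^0$ to code the resulting sequence), whereas you pick the least $y<f(x)$ satisfying only the bounded approximation $\fa{z<\ell}\theta(x,y,z)$, which keeps the defining predicate $\Delta_0^0$ and makes the coding step slightly more elementary.
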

\begin{proof}
    Consider the following tree $T$ which is $\Delta_1^0$-definable in $(M,\mathcal{X})$:
    \[\sigma\in T\iff \falt{x,z}{\len\sigma}(\sigma(x)<f(x)\wedge\theta(x,\sigma(x).z))\]
Obviously $T$ is bounded by $f$, so we only need to show that $T$ is infinite.
Let $F(x)=\max_{x'< x}f(x')$. 
For any $x\in M$, by $\ind\Sigma_1^0$, let $\sigma_x\in M$ be a coded sequence of length $x$ such that for any $x'<x$ and $y'<F(x)$,
\[\sigma_x(x')=y'\iff (M,\mathcal{X})\models\fa z\theta(x',y',z)\wedge\falt w {y'}\neg\fa z\theta(x',w,z).\]
Then we have
\[(M,\mathcal{X})\models\falt{x'}{x}\fa z(\sigma_x(x')<f(x)\wedge \theta(x',\sigma_x(x'),z)).\] 
This means $\sigma_x$ is an element of $T$ of length $x$, so $T$ is infinite.
By $\WKL_0$, take an infinite path $P$ of $T$. Clearly $P$ satisfies the two requirements above.
\end{proof}


\begin{proof}[Alternative proof of Theorem~\ref{thm:main}]
    We construct an ultrafilter $\u$ on $\mathcal{X}$ in $\omega$ many stages. 
    Along the construction, we gradually guarantee that the ultrapower $(\ultp,\mathcal{X})\models \bd\Sigma_1^0$ and $\u$ is additive.\par
    Enumerate all the triples $\{\langle \ex z \theta_k(x,y,z),f_k, g_k\rangle\}_{k\in \IN}$, where $\theta_k(x,y,z)\in\Delta_0^0(M,\mathcal{X})$ and $f_k,g_k$ are total functions in $\mathcal{X}$, and enumerate all the bounded total functions in $\mathcal{X}$ as $\{h_k\}_{k\in \IN}$. For each $k\in\IN$, at stage $k$ we construct a cofinal set $A_k\in\mathcal{X}$ such that $A_k\supseteq A_{k+1}$ for all $k\in \IN$, and the resulting ultrafilter $\u=\{A\in\mathcal{X}\mid\exin {k}{\IN} A\supseteq A_k\}$.\par
    \textbf{Stage $\boldsymbol{0}$ }: Set $A_0=M\in\mathcal{X}$.\par
    \textbf{Stage $\boldsymbol{2k+1}$ }($(\ultp,\mathcal{X})\models \bd\Sigma_1^0$):
    At these stages we want to guarantee the following instances of $\bd\Sigma_1^0$:
    \[\falt{y}{[g_k]}\ex z\theta_k([f_k],y,z)\rightarrow\ex b\falt{y}{[g_k]}\exlt {z}{b}\theta_k([f_k],y,z).\]
    The general idea is that we first try to `force' the consequent of above implication to be true in $(\ultp,\mathcal{X})$. If we succeed, then the entire statement is true. Otherwise, we apply Lemma~\ref{lem:RPi_1^-} to argue that the antecedent is already guaranteed to be false in the ultrapower.\par
    Consider the $\Sigma_1^0$-definable set 
    \[A=A_{2k}\cap\{x\in M\mid \ex b\falt {y}{g_k(x)}\exlt{z}{b}\theta_k(f_k(x),y,z)\}.\]
    If it is cofinal in $M$, then there is a cofinal subset $A^*\in\mathcal{X}$ of $A$ by H\'ajek--Pudl\'ak~\cite[I, Theorem 3.22]{book:HP}. Let $A_{2k+1}=A^*$ and proceed to stage $2k+2$. If $A$ is not cofinal in $M$, we let $A_{2k+1}=A_{2k}$ and proceed directly to stage $2k+2$.\par
    \textbf{Stage $\boldsymbol{2k+2}$ }(Additiveness of $\u$): This part is exactly the same as the construction of stage $2k+2$ in the first proof of Theorem~\ref{thm:main}.
    
    Finally, let $\u=\{A\in\mathcal{X}\mid \exin k {\IN}A\supseteq A_k\}$. It is not hard to see that $\u$ is an ultrafilter, and each element of $\u$ is cofinal in $M$. This completes the whole construction. \par
    \textbf{Verification}: Let $(\ultp,\mathcal{X})$ be the corresponding second-order restricted ultrapower. We show that $(M,\mathcal{X})\elemsub_{\ee,\Sigma_2^0}(\ultp,\mathcal{X})$ and $(\ultp,\mathcal{X})\models\bd\Sigma_1^0$. The elementarity is given by Corollary~\ref{cor:2ndele}. $(M,\mathcal{X})\subseteq_\ee(\ultp,\mathcal{X})$ follows from the exact same reasoning as in the first proof of Theorem~\ref{thm:main}. For $(\ultp,\mathcal{X})\models \bd\Sigma_1^0$, consider an arbitrary instance of $\bd\Sigma_1^0$ in $(\ultp,\mathcal{X})$:
    \[\falt{y}{[g]}\ex z\theta([f],y,z)\rightarrow\ex b\falt{y}{[g]}\exlt {z}{b}\theta([f],y,z),\]
    where $\theta\in \Delta_0^0(M,\mathcal{X})$. Here without loss of generality, we assume there is only one first-order parameter $[f]\in \ultp$ in $\theta$. 
    Assume at stage $2k+1$, we enumerate $\langle \ex z \theta_k(x,y,z),f_k, g_k\rangle=\langle \ex z\theta(x,y,z),f,g\rangle$, and $A_{2k}\in \mathcal{X}$ is the cofinal subset of $M$ we obtained from the previous stage.
    Suppose we are in the first case of the construction in this stage, i.e.,
    \[A=A_{2k}\cap \{x\in M\mid \ex b\falt{y}{g(x)}\exlt {z}{b}\theta(f(x),y,z)\}\]
    is cofinal in $M$, then by the construction there exists some $A^*\subseteq A$ in $\u$. By restricted \L o\'s's theorem, $(\ultp,\mathcal{X})\models \ex b\falt{y}{[g]}\exlt{z}{b}\theta([f],y,z)$, so the instance of $\bd\Sigma_1^0$ is true. If we are in the second case, assuming $A$ is bounded by $d\in M$, then
    \[(M,\mathcal{X})\models\falg x d(x\in A_{2k}\rightarrow \fa b\exlt y {g(x)}\falt z b\neg\theta(f(x),y,z)).\]
    By $\bd\Sigma_1^0$ in $M$, this fact is equivalent to: 
    \[(M,\mathcal{X})\models\falg{x}{d}\exlt{y}{g(x)}(x\in A_{2k}\rightarrow\fa z\neg\theta(f(x),y,z)).\]
    By Lemma~\ref{lem:RPi_1^-}, there is a total function $P\in \mathcal{X}$ such that 
    \begin{enumerate}
        \item $(M,\mathcal{X})\models\falg {x}{d} P(x)<g(x).$
        \item $(M,\mathcal{X})\models\falg {x}{d} (x\in A_{2k}\rightarrow\fa z\neg \theta(f(x),P(x),z) ).$ 
    \end{enumerate}
    The first clause implies $(\ultp,\mathcal{X})\models [P]<[g]$ by restricted \L o\'s's theorem. Suppose $(\ultp,\mathcal{X})\models \ex z\theta([f],[P],z)$, then by restricted \L o\'s's theorem for $\Sigma_1^0$ formulas, there is some ${A'}\in \u$ such that ${A'}\subseteq \{x\in M\mid \ex z\theta(f(x),P(x),z)\}$, but by the second clause, $A'\cap A_{2k}$ is bounded by $d$, which contradicts $A'\cap A_{2k}\in\u$. So $(\ultp,\mathcal{X})\models \fa z\neg\theta([f],[P],z)$, and the instance of $\bd\Sigma_1^0$ considered is vacuously true.
\end{proof}

We now proceed to the construction of end extensions for characterizing countable models of $\ind\Sigma_{n+2}$. We first define $K\models M\hyp\ind\Sigma_{n+1}$ for an end extension $M\elemsub_\ee K$, and introduce some equivalent definitions of it.
\begin{definition}
    For each $n\in\IN$, let $M,K$ be models of $\ind\Delta_0+\exp$ and $M\subseteq_\ee K$.
        We say $K\models M\hyp\ind\Sigma_{n+1}$ if for any $\phi(x)\in\Sigma_{n+1}(K)$ and $a\in M$,
        \[K\models \phi(0)\wedge\falt x a(\phi(x)\rightarrow\phi(x+1))\rightarrow\falt x a\phi(x).\]
    Notice that we allow parameters in $K$ in $\phi$ while the bound $a$ must be in $M$.
\end{definition}
\begin{lem}\label{lem:tfaeMind}
    For each $n\in\IN$, let $M$ be a model of $\ind\Delta_0+\exp$, $K\models\ind\Sigma_n$ and $M\subseteq_\ee K$, then the following are equivalent:
    \begin{enumerate}\tfaeenum
        \item \label{lem:equivMind:1}
        $K\models M\hyp\ind\Sigma_{n+1}$.
        \item \label{lem:equivMind:2}
        For any $\phi(x)\in\Sigma_{n+1}(K)$ and $a\in M$,
        \[K\models \ex c\falt {x} a(\phi(x)\leftrightarrow x\in c).\]
        \item \label{lem:equivMind:3}
        For any $\theta(x,y)\in\Pi_{n}(K)$ and $a\in M$,
        \[K\models \ex b\falt x a(\ex y\phi(x,y)\leftrightarrow\exlt y b\theta(x,y)).\]
    \end{enumerate}
\end{lem}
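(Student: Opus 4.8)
The three clauses are the $M$-bounded forms of $\Sigma_{n+1}$-induction, of the coding of bounded $\Sigma_{n+1}$-definable sets, and of strong $\Sigma_{n+1}$-collection; over $\ind\Sigma_n+\exp$ these are classically equivalent (cf.\ \cite{book:HP,book:kaye}), and the plan is to follow those proofs and check that pushing the bounds into $M$ causes no harm. Two features make the relativisation routine. First, since $M\subseteq_{\ee}K$, a quantifier $\falt x a$ with $a\in M$ ranges over the initial segment $\{x:x<a\}$ of $M$, so the objects $c,b$ in \clref{lem:equivMind:2} and \clref{lem:equivMind:3} live in $K$ while all the finitary data sits in $M$. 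Second, $M\models\exp$ forces the code of any subset of $\{x:x<a\}$ to lie below $2^a\in M$, so the existential quantifier over codes in the coding statement can be taken bounded. I will also use freely that $K\models\ind\Sigma_n+\exp$, hence $K\models\bd\Sigma_n$, so that any doubly bounded $\Sigma_0(\Sigma_n)$-matrix is equivalent over $K$ to a $\Delta_{n+1}(K)$-formula, and every $\Pi_n(K)$-definable $K$-bounded set is coded in $K$.

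I would prove the cycle \clref{lem:equivMind:1}$\Rightarrow$\clref{lem:equivMind:3}$\Rightarrow$\clref{lem:equivMind:2}$\Rightarrow$\clref{lem:equivMind:1}. The implication \clref{lem:equivMind:2}$\Rightarrow$\clref{lem:equivMind:1} is immediate: given $\phi\in\Sigma_{n+1}(K)$ and $a\in M$ with $K\models\phi(0)\wedge\falt x a(\phi(x)\rightarrow\phi(x+1))$, take $c$ coding $\{x<a:\phi(x)\}$ from \clref{lem:equivMind:2}; then $K\models 0\in c\wedge\falt x a(x\in c\rightarrow x+1\in c)$, and $\ind\Delta_0$ in $K$ applied to the $\Delta_0$-formula $x\in c$ yields $\falt x a\,x\in c$, hence $\falt x a\,\phi(x)$. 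For \clref{lem:equivMind:3}$\Rightarrow$\clref{lem:equivMind:2}: given $\phi(x)=\ex y\theta(x,y)\in\Sigma_{n+1}(K)$ with $\theta\in\Pi_n(K)$ and $a\in M$, use \clref{lem:equivMind:3} to get $b\in K$ with $\falt x a(\phi(x)\leftrightarrow\exlt y b\theta(x,y))$; the set $E=\{\langle x,y\rangle:x<a\wedge y<b\wedge\theta(x,y)\}$ is $\Pi_n(K)$-definable and bounded by an element of $K$ computable from $a,b$, hence coded by some $e\in K$, and then $c:=\{x<a:\exlt y b\ \langle x,y\rangle\in e\}$ is coded in $K$ (it is $\Delta_0$ in the parameters $a,b,e$ and bounded by $2^a$) and satisfies $\falt x a(x\in c\leftrightarrow\phi(x))$.

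The remaining edge \clref{lem:equivMind:1}$\Rightarrow$\clref{lem:equivMind:3} I would split in two. First I extract from \clref{lem:equivMind:1} the non-strict principle $M\hyp\bd\Sigma_{n+1}$: for $\theta\in\Pi_n(K)$ and $a\in M$ with $K\models\falt x a\ex y\theta(x,y)$, apply $M\hyp\ind\Sigma_{n+1}$ to
\[\Lambda(w)\ :\equiv\ \ex v\falt x w\exlt y v\,\theta(x,y),\]
which really is a $\Sigma_{n+1}(K)$-formula, since its doubly bounded matrix lies in $\Sigma_0(\Sigma_n)$ and hence is $\Sigma_{n+1}$ over $\bd\Sigma_n$; the successor step merely bumps $v$ past one new witness, and $\Lambda(a)$ is the required bound. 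Secondly I invoke the classical fact that, over $\ind\Sigma_n+\exp$, non-strict $\Sigma_{n+1}$-collection implies strong $\Sigma_{n+1}$-collection --- the argument there needs only $\exp$ and the coding of $\Pi_n$-definable bounded sets, both available, and it relativises to $M$-bounds without change, yielding \clref{lem:equivMind:3}. (Alternatively, one can go straight from \clref{lem:equivMind:1} to \clref{lem:equivMind:2} by running $M\hyp\ind\Sigma_{n+1}$ on $\Theta(w):\equiv\exists c<2^a\,\falt x w(x\in c\leftrightarrow\ex y\theta(x,y))$, using that $c<2^a\in M$ keeps this an $M$-bounded statement and that $M\hyp\bd\Sigma_{n+1}$ collapses $\falt x w$ in front of the $\Sigma_{n+1}$-direction of the biconditional, so that $\Theta$ falls within the reach of $M$-bounded $\Sigma_0(\Sigma_{n+1})$-induction.)

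The step I expect to be the main obstacle is exactly this last one --- keeping formula complexity under control so that $M$-bounded $\Sigma_{n+1}$-induction (or $M$-bounded $\Sigma_0(\Sigma_{n+1})$-induction, itself a consequence of it) applies to the coding formula, and verifying that non-strict $M$-bounded collection really upgrades to the strict form \clref{lem:equivMind:3}. This is where the hypothesis $K\models\ind\Sigma_n+\exp$ is genuinely used, rather than merely $\ind\Delta_0$: for the sub-$\Sigma_{n+1}$ collapses $\Sigma_0(\Sigma_n)\subseteq\Delta_{n+1}$, for the coding of $\Pi_n(K)$-definable bounded sets, and --- via the derived $M\hyp\bd\Sigma_{n+1}$ --- for collapsing $M$-bounded universal quantifiers standing in front of $\Sigma_{n+1}(K)$-matrices; and the bound $2^a\in M$ on codes is what prevents the existential quantifier over codes from driving the complexity beyond what $M\hyp\ind\Sigma_{n+1}$ controls.
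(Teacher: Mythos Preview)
Your implications \clref{lem:equivMind:3}$\Rightarrow$\clref{lem:equivMind:2} and \clref{lem:equivMind:2}$\Rightarrow$\clref{lem:equivMind:1} are correct and essentially match the paper's. The gap is in \clref{lem:equivMind:1}$\Rightarrow$\clref{lem:equivMind:3}: the ``classical fact'' you invoke --- that over $\ind\Sigma_n+\exp$ non-strict $\Sigma_{n+1}$-collection implies strong $\Sigma_{n+1}$-collection --- is false. Strong $\Sigma_{n+1}$-collection (which is exactly what \clref{lem:equivMind:3} asserts) is equivalent to $\ind\Sigma_{n+1}$, so your claim would collapse $\bd\Sigma_{n+1}$ and $\ind\Sigma_{n+1}$. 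Concretely, in any model of $\bd\Sigma_{n+1}+\neg\ind\Sigma_{n+1}$ there is a $\Sigma_{n+1}$-definable proper cut $I$; strong collection applied with some $a>I$ would render $I$ $\Sigma_0(\Sigma_n)$-definable below $a$, contradicting $\ind\Sigma_n$. Your alternative via $\Theta(w)$ is closer in spirit, but you have not established that $M\hyp\ind\Sigma_{n+1}$ extends to $\Sigma_0(\Sigma_{n+1})$-formulas with $M$-bounds; the $\Pi_{n+1}$-direction of the biconditional survives the bounded existential $\exists c<2^a$, so $\Theta$ does not collapse to $\Sigma_{n+1}$ as written.

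The paper proves \clref{lem:equivMind:1}$\Rightarrow$\clref{lem:equivMind:2} directly and then \clref{lem:equivMind:2}$\Rightarrow$\clref{lem:equivMind:3}. For \clref{lem:equivMind:1}$\Rightarrow$\clref{lem:equivMind:2}, one first derives the $M$-bounded least number principle for $\Pi_{n+1}$ from \clref{lem:equivMind:1} (the standard reflection argument goes through since any failure occurs below $a\in M$), and then picks the \emph{least} $c<2^a$ satisfying $\falt x a(\phi(x)\rightarrow x\in c)$; this condition is genuinely $\Pi_{n+1}$, and minimality forces $c$ to code $\{x<a:\phi(x)\}$ exactly. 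For \clref{lem:equivMind:2}$\Rightarrow$\clref{lem:equivMind:3}, having the code $c$ already in hand as a parameter, one applies $M\hyp\ind\Sigma_{n+1}$ to $\Phi(v):=\ex b\falt x v(x\in c\leftrightarrow\exlt y b\theta(x,y))$: the inner matrix is now $\Pi_n\wedge\Sigma_n$ over $\bd\Sigma_n$, so $\Phi$ really is $\Sigma_{n+1}$. The moral is that the passage from \clref{lem:equivMind:1} to \clref{lem:equivMind:3} must route through the code; finding the code and bounding the witnesses cannot be merged into a single $\Sigma_{n+1}$-induction.
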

\begin{proof}
    We show $\clref{lem:equivMind:1}\Leftrightarrow\clref{lem:equivMind:2}$ and $\clref{lem:equivMind:2}\Leftrightarrow\clref{lem:equivMind:3}$. 
    For $\clref{lem:equivMind:1}\Rightarrow\clref{lem:equivMind:2}$, first by modifying a standard argument, one can show that $\clref{lem:equivMind:1}$ implies the least number principle for $\Pi_{n+1}$-formulas that is satisfied by some element of $M$.
    Then we pick the least $c<2^a\in M$ such that 
    \[K\models \falt x a(\phi(x)\rightarrow x\in c).\]
    Such $c$ will code $\phi(x)$ for $x<a$ by the minimality of $c$.
    For $\clref{lem:equivMind:2}\Rightarrow\clref{lem:equivMind:1}$, take the code $c$ of $\phi(x)$ below $a\in M$. Then, one can prove the instance of $M\hyp\ind\Sigma_{n+1}$ for $\phi(x)$ by replacing $\phi(x)$ with $x\in c$ and applying $K\models\ind\Delta_0$.
    For $\clref{lem:equivMind:2}\Rightarrow\clref{lem:equivMind:3}$, take the code $c$ of $\{x<a\mid K\models\ex y\theta(x,y)\}$ by $\clref{lem:equivMind:2}$. Consider the $\Sigma_{n+1}$-formula
    \[\Phi(v):=\ex b\falt x v(x\in c\leftrightarrow\exlt y b\theta(x,y)).\]
    It is not hard to show that $K\models\Phi(0)\wedge\fa v(\Phi(v)\rightarrow\Phi(v+1))$.
    By $M\hyp\ind\Sigma_{n+1}$ (from $\clref{lem:equivMind:2}\Rightarrow\clref{lem:equivMind:1}$) we have $K\models \Phi(a)$, which implies $\clref{lem:equivMind:3}$. For $\clref{lem:equivMind:3}\Rightarrow\clref{lem:equivMind:2}$, let $\phi(x):=\ex y\theta(x,y)$ for some $\theta\in \Pi_n(K)$. 
    By $\clref{lem:equivMind:3}$, there is some $b\in K$ such that
    \[K\models\falt x a(\ex y\theta(x,y)\leftrightarrow\exlt y b\theta(x,y)).\]
    By $\ind\Sigma_n$ in $K$, there is some $c\in K$ that codes $\{x<a\mid K\models\exlt y b\theta(x,y)\}$. Such $c$ will serve as the witness for $\clref{lem:equivMind:2}$.
\end{proof}
\begin{thm}\label{thm:main-char}
    For any countable model $(M,\mathcal{X})\models \ind\Sigma_2^0$, there is a proper end extension $(M,\mathcal{X})\elemsub_{\ee,\Sigma_2^0}(K,\mathcal{X})$ such that for any $\Sigma_1^0(K,\mathcal{X})$-formula $\phi(z)$ and $a\in M$, the set $\{z<a\mid (K,\mathcal{X})\models\phi(z)\}$ is coded in $K$.
\end{thm}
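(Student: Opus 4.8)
The plan is to take $K=\ultp$, a second-order restricted ultrapower of $(M,\mathcal{X})$ modulo an ultrafilter $\u$ on $\mathcal{X}$ all of whose members are cofinal in $M$, and to build $\u$ additive. By Corollary~\ref{cor:2ndele} any such $\u$ yields $(M,\mathcal{X})\elemsub_{\Sigma_2^0}(\ultp,\mathcal{X})$; Lemma~\ref{lem:additive} turns additiveness into end-extendibility, and properness is automatic. Exponentiation is a $\Pi_2$ consequence of $\ind\Sigma_2^0$, hence absolute by $\Sigma_2^0$-elementarity, so $\ultp$ carries the usual coding of finite sets. It remains only to arrange, for every $\Sigma_1^0(M,\mathcal{X})$-formula $\phi(z,x)=\ex u\,\psi(z,x,u)$ with $\psi\in\Delta_0^0(M,\mathcal{X})$ (pairing reduces to one first-order parameter $x$), every $f\in\f$, and every $a\in M$, that $\{z<a:(\ultp,\mathcal{X})\models\phi(z,[f])\}$ be coded in $\ultp$. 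I would secure these requirements at their own stages of the construction of $\u$, the additiveness requirements being treated verbatim as in the additiveness stages of the proof of Theorem~\ref{thm:main}.

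Consider the stage for $\langle\phi,f,a\rangle$ with inherited cofinal set $A\in\mathcal{X}$. The target is a cofinal $A'\in\mathcal{X}$ refining $A$ and an element $c_0<2^a$ of $M$ such that $A'\subseteq\{i:\{z<a:(M,\mathcal{X})\models\phi(z,f(i))\}=c_0\}$; then the constant function $c_0$, as an element of $M\subseteq\ultp$, will code $\{z<a:(\ultp,\mathcal{X})\models\phi(z,[f])\}$, by the restricted \L o\'s theorem (Theorem~\ref{thm:los}). To pick $c_0$: for each $c<2^a$ the set $\{i:\{z<a:\phi(z,f(i))\}\supseteq c\}$ is $\Sigma_1^0$-definable over $(M,\mathcal{X})$ (invoking $\bd\Sigma_1^0$ to turn $\falt z a(z\in c\to\ex u\psi)$ into $\ex v\,\falt z a(z\in c\to\exlt u v\psi)$), so ``that set is cofinal'' is a $\Pi_2^0$ property of $c$ holding at $c=\emptyset$; as the candidates number fewer than $2^a\in M$, the induction in $\ind\Sigma_2^0$ lets us take $c_0$ to be $\subseteq$-maximal among the $c$ for which it holds. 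Let $E:=\{i:\{z<a:\phi(z,f(i))\}\supseteq c_0\}$, a $\Sigma_1^0$-definable cofinal set; as in Theorem~\ref{thm:main} (via H\'ajek--Pudl\'ak~\cite[I, Theorem 3.22]{book:HP}) $E$ has a cofinal subset $A'\in\mathcal{X}$. If cofinally many $i\in A'$ admitted some $z<a$ with $z\notin c_0$ and $\phi(z,f(i))$, then $\bd\Sigma_2^0$-collection over the finitely many such $z$ would furnish one value $z^\ast$ with $A'\cap\{i:\phi(z^\ast,f(i))\}$ cofinal, whence $c_0\cup\{z^\ast\}$ would also have the $\Pi_2^0$ property, contradicting maximality. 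So $A'\cap\{i:\exlt z a(z\notin c_0\wedge\phi(z,f(i)))\}$ is bounded, by some $d\in M$, and $A'':=A'\cap(d,\infty)\in\mathcal{X}$ is cofinal with $\{z<a:\phi(z,f(i))\}$ constantly $c_0$ on it; shrink the working set to $A''$.

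The verification that the constant function $c_0$ codes $\{z<a:(\ultp,\mathcal{X})\models\phi(z,[f])\}$ is then a routine application of Theorem~\ref{thm:los}: for $z\in c_0$ the $\u$-member $A''$ is contained in $\{i:(M,\mathcal{X})\models\ex u\psi(z,f(i),u)\}$, so $(\ultp,\mathcal{X})\models\phi(z,[f])$; for $z\notin c_0$ we have $A''\subseteq\{i:(M,\mathcal{X})\models\fa u\neg\psi(z,f(i),u)\}$, so no member of $\u$ witnesses $\phi(z,[f])$, whence $(\ultp,\mathcal{X})\models\neg\phi(z,[f])$; and $(\ultp,\mathcal{X})\models z\in[c_0]$ iff $z\in c_0$ by restricted \L o\'s for the $\Delta_0$ membership predicate. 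Finally $(M,\mathcal{X})\subseteq_\ee(\ultp,\mathcal{X})$ comes from additiveness exactly as in Theorem~\ref{thm:main}, and $\Sigma_2^0$-elementarity from Corollary~\ref{cor:2ndele}, completing $(M,\mathcal{X})\elemsub_{\ee,\Sigma_2^0}(\ultp,\mathcal{X})$.

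The main obstacle I anticipate is precisely the point where the hypothesis must be pushed from the $\bd\Sigma_2^0$ that powered Theorem~\ref{thm:main} to $\ind\Sigma_2^0$: the ``good'' candidates $c$ are cut out by a genuinely $\Pi_2^0$ condition over the (nonstandardly many) subsets of $[0,a)$, and choosing a $\subseteq$-maximal one requires the full $\Sigma_2^0$-induction — equivalently the least-number principle for $\Pi_2^0$ — rather than mere collection. One must also check that this maximality step interlocks cleanly with the $\Sigma_1^0$-thinning producing $A'$ and $A''$ and that each ``cofinal subset in $\mathcal{X}$'' extraction can be carried out inside $(M,\mathcal{X})$; beyond that, the argument is the same restricted-\L o\'s calculus already displayed in Theorem~\ref{thm:main}.
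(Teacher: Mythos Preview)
Your approach is essentially the paper's: build an additive ultrafilter in stages, and at each coding stage use $\ind\Sigma_2^0$ to select a $\subseteq$-maximal $c_0<2^a$ for which cofinally many $i$ have $\{z<a:\phi(z,f(i))\}\supseteq c_0$, then refine to a cofinal set in $\mathcal X$ contained in that $\Sigma_1^0$ locus. There is one genuine slip in the execution: your $\Pi_2^0$ property ``$\{i:\{z<a:\phi(z,f(i))\}\supseteq c\}$ is cofinal'' omits the inherited set $A$. It must read ``$A\cap\{i:\ldots\}$ is cofinal'' (the paper writes $x\in A_{2k}$ inside its formula). Without this, your $E$, $A'$, $A''$ need not intersect $A$ cofinally at all---e.g.\ if $A$ is the evens and $\phi(0,f(i))$ holds exactly on the odds, you would get $c_0\ni 0$ and $A''\cap A=\emptyset$---so the decreasing filter base breaks and you cannot place both $A$ and $A''$ in $\u$. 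Once $A$ is threaded through the condition, everything you wrote works.

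The only organizational difference from the paper is that you do extra work at construction time: you invoke $\bd\Sigma_2^0$ (equivalently $\rd\Sigma_1^0$) to shrink from $A'$ to an $A''$ on which the code equals $c_0$ pointwise, making the verification a pure restricted-\L o\'s computation. The paper instead stops at $A_{2k+1}$ (where the code merely contains $c_0$) and defers the maximality-of-$c_0$ argument to the verification: if some $z'\notin c_0$ had $(\ultp,\mathcal X)\models\phi(z',[f])$, the witnessing $B\in\u$ intersected with $A_{2k+1}$ would certify $c_0\cup\{z'\}$, contradicting maximality. Both packagings are correct.
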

\begin{proof}
    The construction is a mild generalization of the construction in~\cite[Proposition 7]{incoll:clote}. 
    We make the ultrapower $(\ultp,\mathcal{X})$ satisfy the coding requirement by maximizing each $\Sigma_1^0$-definable subset of $\ultp$ that is bounded by some element of $M$.

    Enumerate all the pairs $\{\langle \ex y\theta_k(x,y,z),a_k\rangle\}_{k\in\IN}$ where $\theta_k(x,y,z)\in\Delta_0^0(M,\mathcal{X})$ and $a_k\in M$, and enumerate all the bounded total functions in $\mathcal{X}$ as $\{h_k\}_{k\in\IN}$.
    At each stage $k$ we construct a cofinal set $A_k\in\mathcal{X}$ such that $A_k\supseteq A_{k+1}$ for all $k\in \IN$, and the resulting ultrafilter $\u=\{A\in\mathcal{X}\mid\exin {k}{\IN} A\supseteq A_k\}$.
    
    \textbf{Stage $\boldsymbol{0}$}: Let $A_0=M$.\par
    \textbf{Stage $\boldsymbol{2k+1}$ }(coding $\Sigma_1^0$ sets): Consider the following $\Pi_2^0$-formula where $c<2^{a_k}$:
    \[\excf x(x\in A_{2k}\wedge \fain z c\ex y\theta_k(x,y,z)).\]
    By $(M,\mathcal{X})\models\ind\Sigma_2^0$, there exists a maximal $c_0<2^{a_k}$ satisfying the formula above.
    Similar to the alternative proof of Theorem~\ref{thm:main}, let $A_{2k+1}\in \mathcal{X}$ be a cofinal subset of the following $\Sigma_1^0$-definable subset of $M$:
    \[\{x\in M\mid (M,\mathcal{X})\models x\in A_{2k}\wedge \fain z {c_0}\ex y\theta_k(x,y,z)\}.\]\par
    \textbf{Stage $\boldsymbol{2k+2}$ }(Additiveness of $\u$): This part is exactly the same as stage $2k+2$ in the proof of Theorem~\ref{thm:main}.
    
    Finally, let $\u=\{A\in\mathcal{X}\mid \exin k {\IN}A\supseteq A_k\}$. It is not hard to see that $\u$ is an ultrafilter, and each element of $\u$ is cofinal in $M$. This completes the whole construction.
    
    \textbf{Verification}: Let $(\ultp,\mathcal{X})$ be the corresponding second-order restricted ultrapower. $(M,\mathcal{X})\elemsub_{\ee,\Sigma_2^0}(\ultp,\mathcal{X})$ follows in exactly the same way as in Theorem~\ref{thm:main}.

    For the coding requirement of $(\ultp,\mathcal{X})$, consider any $\Sigma_1^0$-formula $\ex y\theta([f],y,z)$ where $\theta\in \Delta_0^0(\ultp,\mathcal{X})$ and $[f]\in\ultp$ is the only first-order parameter of $\theta$.
    For any $a\in M$, assume for some $k\in\IN$, at stage $2k+1$ we enumerate the pair $\langle \ex y\theta(f(x),y,z),a\rangle$. We show that the maximal $c_0\in M$ we obtained in the construction codes $\{z<a\mid (\ultp,\mathcal{X})\models \ex y\theta([f],y,z)\}$.

    For each $z<a$ such that $z\in c_0$, since $A_{2k+1}$ is a subset of $\{x\in M\mid (M,\mathcal{X})\models\ex y\theta(f(x),y,z)\}$, $(\ultp,\mathcal{X})\models\ex y\theta([f],y,z)$ by restricted \L o\'s's theorem for $\Sigma_1^0$-formulas.
    On the other hand, for each $z'<a$ such that $z'\notin c_0$, if $(\ultp,\mathcal{X})\models\ex y\theta([f],y,z')$, then by restricted \L o\'s's theorem for $\Sigma_1^0$-formula again there is some $B\in \u$ such that
    \[B\subseteq \{x\in M\mid (M,\mathcal{X})\models\ex y\theta(f(x),y,z')\}.\]
    Since $B\cap A_{2k+1}\in \u$ is still cofinal in $M$, we have
    \[\excf x(x\in A_{2k}\wedge \fain z {{c_0}\cup\{z'\}}\ex y\theta(f(x),y,z)),\]
    which contradicts the maximality of $c_0$ in the construction.
    So $(\ultp,\mathcal{X})\models\neg\ex y\theta([f],y,z')$.
\end{proof}
\begin{thm}\label{thm:clote}
    For each $n\in \IN$ and any countable model $M\models\ind\Sigma_{n+2}$, there is an $\Sigma_{n+2}$-elementary proper end extension $M\elemsub_{\ee,\Sigma_{n+2}}K\models M\hyp\ind\Sigma_{n+1}$.
\end{thm}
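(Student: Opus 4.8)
\emph{Proof proposal.} The plan is to mimic the proof of Theorem~\ref{thm:kaufmann-clote}, with Theorem~\ref{thm:main-char} in place of Theorem~\ref{thm:main} and Lemma~\ref{lem:tfaeMind} used to repackage the coding property; all the real work is already inside Theorem~\ref{thm:main-char}, and what is left is the usual ``reduction to the base level'' via Corollary~\ref{cor:1stele} and Lemma~\ref{lem:groundCA}.

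First I would fix the second-order universe: let $\mathcal{X}$ be the collection of all $\Delta_{n+1}$-definable subsets of $M$ (parameters from $M$ allowed), so that $(M,\mathcal{X})$ is countable. I claim $(M,\mathcal{X})\models\RCA_0+\ind\Sigma_2^0$ and that $(M,\mathcal{X})$ satisfies $\Sigma_n$- and $\Pi_n$-comprehension. The second point is immediate, since every $\Sigma_n$- or $\Pi_n$-definable set is $\Delta_{n+1}$-definable. For the first, a routine bounded-quantifier computation shows that, over $\bd\Sigma_{n+1}$, every $\Delta_0^0(M,\mathcal{X})$-formula is $\Delta_{n+1}(M)$ --- one replaces each signed occurrence of a membership atom $t\in A$ by the appropriate $\Sigma_{n+1}$-definition of $A$ --- and hence $\Sigma_1^0(M,\mathcal{X})\subseteq\Sigma_{n+1}(M)$, $\Pi_1^0(M,\mathcal{X})\subseteq\Pi_{n+1}(M)$, and $\Sigma_2^0(M,\mathcal{X})\subseteq\Sigma_{n+2}(M)$. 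Thus $\Delta_1^0$-comprehension and $\ind\Sigma_1^0$ follow from $\ind\Sigma_{n+1}$, and $\ind\Sigma_2^0$ follows from $\ind\Sigma_{n+2}$.

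Next I would apply Theorem~\ref{thm:main-char} to $(M,\mathcal{X})$, obtaining a proper end extension $(M,\mathcal{X})\elemsub_{\ee,\Sigma_2^0}(\ultp,\mathcal{X})$ in which $\{z<a:(\ultp,\mathcal{X})\models\psi(z)\}$ is coded for every $\Sigma_1^0(\ultp,\mathcal{X})$-formula $\psi(z)$ and every $a\in M$. Put $K:=\ultp$. By the comprehension in $(M,\mathcal{X})$ and Corollary~\ref{cor:1stele}, $M\elemsub_{\Sigma_{n+2}}K$; combined with the end extension this gives $M\elemsub_{\ee,\Sigma_{n+2}}K$, $K$ is proper, and $K\models\ind\Delta_0+\exp$ (these being $\Pi_2$-axiomatizable and hence preserved upward along $M\elemsub_{\Sigma_2}K$). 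It remains to check clause~\clref{lem:equivMind:2} of Lemma~\ref{lem:tfaeMind} for $K$. Fix $\phi(x)=\ex y\theta(x,y,\ov d)\in\Sigma_{n+1}(K)$ with $\theta\in\Pi_n$ and $\ov d\in K$, and fix $a\in M$. By $\Pi_n$-comprehension in $(M,\mathcal{X})$ and Lemma~\ref{lem:groundCA} (or Corollary~\ref{cor:2ndele} when $n=0$), there is $A\in\mathcal{X}$ with $(K,\mathcal{X})\models\fa x\fa y(\langle x,y,\ov d\rangle\in A\nsc\theta(x,y,\ov d))$; then in $(K,\mathcal{X})$ the formula $\phi(x)$ is equivalent to the $\Sigma_1^0(K,\mathcal{X})$-formula $\ex y\langle x,y,\ov d\rangle\in A$, so the coding property gives a code in $K$ for $\{x<a:K\models\phi(x)\}$. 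By Lemma~\ref{lem:tfaeMind} --- whose standing hypothesis $K\models\ind\Sigma_n$ holds because $\ind\Sigma_n$ is $\Pi_{n+2}$-axiomatizable and thus persists up the $\Sigma_{n+2}$-elementary extension --- this yields $K\models M\hyp\ind\Sigma_{n+1}$.

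I do not anticipate a real obstacle, since the construction runs exactly parallel to Theorem~\ref{thm:kaufmann-clote}. The one delicate point is the choice of $\mathcal{X}$: it must be large enough to contain every $\Sigma_n$- and $\Pi_n$-definable set (so that Corollary~\ref{cor:1stele} and Lemma~\ref{lem:groundCA} can import $\Sigma_{n+2}$-elementarity and comprehension from the base level) yet tame enough that the second-order induction it supports stays within reach of $\ind\Sigma_{n+2}$; the $\Delta_{n+1}$-definable sets hit this balance, and --- in contrast to Theorem~\ref{thm:kaufmann-clote} --- no $\omega$-extension to a model of $\WKL_0$ is required, because Theorem~\ref{thm:main-char} uses no $\WKL_0$.
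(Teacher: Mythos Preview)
Your proposal is correct and follows essentially the same route as the paper: expand $M$ by its $\Delta_{n+1}$-definable sets to get $(M,\mathcal{X})\models\ind\Sigma_2^0$ with $\Sigma_n$-comprehension, apply Theorem~\ref{thm:main-char}, use Corollary~\ref{cor:1stele} for $\Sigma_{n+2}$-elementarity, and then translate the $\Sigma_1^0$-coding property into clause~\clref{lem:equivMind:2} of Lemma~\ref{lem:tfaeMind} via Lemma~\ref{lem:groundCA}. Your write-up is in fact more explicit than the paper's on several points (the verification that $(M,\mathcal{X})\models\ind\Sigma_2^0$, the standing hypothesis $K\models\ind\Sigma_n$ for Lemma~\ref{lem:tfaeMind}, and the observation that no $\WKL_0$-extension is needed here).
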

\begin{proof}
    The proof is mostly the same as Theorem~\ref{thm:kaufmann-clote}. 
    We expand $M$ to a second-order structure $(M,\mathcal{X})$ satisfying $\ind\Sigma_2^0+\RCA_0$ by adding all the $\Delta_{n+1}$-definable subsets of $M$ into the second-order universe.
    Such $(M,\mathcal{X})$ satisfies $\Sigma_n$-comprehension.
    By Theorem~\ref{thm:main-char}, there exists a second-order ultrapower extension $(M,\mathcal{X})\elemsub_{\ee,\Sigma_2^0}(\ultp,\mathcal{X})$ that codes all the $\Sigma_1^0$-definable subset bounded by some element of $M$. Since all the $\Sigma_n$-definable sets of $M$ are in $\mathcal{X}$, $(M,\mathcal{X})$ satisfies $\Sigma_n$-comprehension and $M\elemsub_{\ee,\Sigma_{n+2}}\ultp$ by Corollary~\ref{cor:1stele}. 
    
    For $\ultp\models M \hyp\ind \Sigma_{n+1}$, we only need to show that $\ultp$ satisfies condition $\clref{lem:equivMind:3}$ in Lemma~\ref{lem:tfaeMind}. 
    Let $\phi(x):=\ex y\theta(x,y,[f])$ be any $\Sigma_{n+1}$-formula, where $\theta\in \Pi_n$ and $[f]\in \ultp$ is the only parameter in $\theta$, and $a$ be some element of $M$. Since $(M,\mathcal{X})$ satisfies $\Sigma_n$-comprehension, there is some $A\in\mathcal{X}$ such that 
    \[(M,\mathcal{X})\models\fa {\langle x,y,z\rangle} (\langle x,y,z\rangle \in A\leftrightarrow \theta(x,y,z)).\]
    The same formula holds in $(\ultp,\mathcal{X})$ by Lemma~\ref{lem:groundCA}, and thus
    \[(\ultp,\mathcal{X})\models\fa x(\ex y\langle x,y,[f]\rangle \in A\leftrightarrow\phi(x)).\]
    There is some $c\in K$ that codes $\{x<a\mid (\ultp,\mathcal{X})\models\ex y\langle x,y,[f]\rangle \in A\}$.
    Such $c$ will also code $\{x<a\mid\ultp\models\phi(x)\}$.  
\end{proof}

\section{The weak regularity principle}
In the final section we provide an application of Lemma~\ref{lem:RPi_1^-}: We introduce the weak regularity principle $\wrd\phi$, a variant of the regularity principle, and determine its strength within the I-B hierarchy. One of the main application is to prove the converse of Theorem~\ref{thm:clote}, and give a model-theoretic characterization of countable models of $\ind\Sigma_{n+2}$ analogous to Theorem~\ref{thm:paris-kirby}.

Mills and Paris~\cite{art:Paris-Mills} introduced the regularity principle $\rd \phi$ to be the universal closure of the following formula:
\[\excf x \exlt {y}{a}\phi(x,y)\rightarrow\exlt y a \excf x\phi(x,y).\]
For any formula class $\Gamma$, let
\[\rd\Gamma=\ind\Delta_0\cup\{\rd\phi\mid\phi\in\Gamma\}.\]
It is also shown in~\cite{art:Paris-Mills} that $\rd\Pi_n\Leftrightarrow\rd\Sigma_{n+1}\Leftrightarrow\bd\Sigma_{n+2}$ for each $n\in \IN$.
The weak regularity principle is defined by replacing the $\exists^{\cf} x$ by $\forall x$ in the antecedent of implication in $\rd\phi$. 
\begin{definition}
    Let $\phi(x,y)$ be a formula in first-order arithmetic with possibly hidden variables. The \defm{weak regularity principle} $\wrd\phi$ denotes the universal closure of the following formula:
    \[\fa x \exlt {y}{a}\phi(x,y)\rightarrow\exlt{y}{a}\excf x\phi(x,y).\]
    For any formula class $\Gamma$, define 
    \[\wrd\Gamma=\ind\Delta_0\cup\{\wrd\phi\mid\phi\in\Gamma\}.\]
\end{definition}
The strength of the weak regularity principle behaves more complicated within the arithmetic hierarchy compared to the regularity principle.
The principle for most of the natural class of formulas correspond to collection schemes, whereas induction schemes are only equivalent to the principle for a highly restricted subclass of $\Sigma_0(\Sigma_{n+1})$-formulas.  
We will show that for each $n\in\IN$, $\wrd\Sigma_0(\Sigma_{n})$, $\wrd\Sigma_{n+1}$, $\wrd\Pi_{n+1}$ and $\wrd(\Sigma_{n+1}\vee\Pi_{n+1})$ are all equivalent to $\bd\Sigma_{n+2}$,
and $\wrd(\Sigma_{n+1}\wedge\Pi_{n+1})$ is equivalent to $\ind\Sigma_{n+2}$.

The weak regularity principle may also be viewed as an infinitary version of the pigeonhole principle, and similar phenomenon arises with the strength of pigeonhole principle in the I-B hierarchy.
Dimitracopoulos and Paris~\cite{art:dcphp} proved that $\php\Sigma_{n+1}$ and $\php\Pi_{n+1}$ are equivalent to $\bd\Sigma_{n+1}$.
$\php(\Sigma_{n+1}\vee\Pi_{n+1})$ and $\php\Sigma_0(\Sigma_{n+1})$ are equivalent to $\ind\Sigma_{n+1}$.

\begin{lem}\label{lem:B->rpi}
     For each $n\in\IN$, $\bd\Sigma_{n+2}\vdash\wrd\Pi_{n+1}$.
\end{lem}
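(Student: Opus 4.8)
The plan is to bypass the end-extension route of Proposition~\ref{prop:endextrpi}--Theorem~\ref{thm:kaufmann-clote} and derive $\wrd\Pi_{n+1}$ directly, treating $\Sigma_n$-formulas as an oracle: relative to such an oracle a $\Pi_{n+1}$-formula is merely $\Pi_1^0$, the choice function it induces lives in the second-order universe, and the remaining counting goes through already at the level of $\bd\Sigma_2^0$. By the completeness theorem it suffices to check that every countable $M\models\bd\Sigma_{n+2}$ satisfies $\wrd\Pi_{n+1}$: a failing instance $\wrd\Pi_{n+1}\phi$ would fail with fixed parameters, hence also in a countable elementary submodel, which still models $\bd\Sigma_{n+2}$. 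So fix a countable $M\models\bd\Sigma_{n+2}$, a formula $\psi(x,y)=\fa z\,\theta(x,y,z)\in\Pi_{n+1}(M)$ with $\theta\in\Sigma_n$, and $a\in M$ with $M\models\fa x\exlt y a\psi(x,y)$; the goal is to produce $y<a$ with $M\models\excf x\psi(x,y)$.

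First I would pass to the second-order setting exactly as in the proof of Theorem~\ref{thm:kaufmann-clote}: take $\mathcal{X}$ to be the collection of all $\Sigma_n$-definable subsets of $M$, so that $(M,\mathcal{X})\models\bd\Sigma_2^0$ and $(M,\mathcal{X})$ satisfies $\Sigma_n$-comprehension, and then $\omega$-extend to a countable $(M,\mathcal{X})\models\bd\Sigma_2^0+\WKL_0$, which still satisfies $\Sigma_n$-comprehension. Using $\Sigma_n$-comprehension, fix $D\in\mathcal{X}$ with $(M,\mathcal{X})\models\fa{\langle x,y,z\rangle}(\langle x,y,z\rangle\in D\leftrightarrow\theta(x,y,z))$; then over $(M,\mathcal{X})$ the formula $\psi(x,y)$ is equivalent to the $\Pi_1^0$-formula $\fa z\,\langle x,y,z\rangle\in D$, and the hypothesis reads $(M,\mathcal{X})\models\fa x\exlt y a\fa z\,\langle x,y,z\rangle\in D$. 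This is exactly the hypothesis of Lemma~\ref{lem:RPi_1^-} for the constant total function $f\equiv a$, so that lemma yields a total $P\in\mathcal{X}$ with $(M,\mathcal{X})\models\fa x(P(x)<a\wedge\fa z\,\langle x,P(x),z\rangle\in D)$; equivalently, $M\models\psi(x,P(x))$ for every $x\in M$.

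It then remains to run a pigeonhole argument on the bounded function $P$, and this is the step that only needs $\bd\Sigma_2^0$. Suppose, for contradiction, that no fibre of $P$ is cofinal in $M$, i.e.\ $(M,\mathcal{X})\models\fa{y<a}\ex b\,\falg x b\,P(x)\neq y$. The collected formula $\ex b\,\falg x b\,P(x)\neq y$ is $\Sigma_2^0$, so $\bd\Sigma_2^0$ provides a bound $B$ with $(M,\mathcal{X})\models\fa{y<a}\exlt b B\,\falg x b\,P(x)\neq y$; instantiating at any $x>B$ gives $\fa{y<a}\,P(x)\neq y$, contradicting $P(x)<a$. Hence some $y<a$ has $\{x\in M\mid P(x)=y\}$ cofinal in $M$, and since $M\models\psi(x,P(x))$ for all $x$, this forces $M\models\psi(x,y)$ for cofinally many $x$, i.e.\ $M\models\excf x\psi(x,y)$. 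Thus $M\models\exlt y a\excf x\psi(x,y)$, which is what was required; being a first-order statement about $M$, it is unaffected by the second-order detour. Since $n$ enters only through the choice of $\mathcal{X}$, a single argument handles every $n$.

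I expect the only genuinely delicate point to be the complexity bookkeeping that makes the strategy run at all. A naive first-order pigeonhole on a $\Pi_{n+1}$-valued choice function would require collecting a formula of the shape $\ex b\,\falg x b\,\neg\psi(x,y)$, which is $\Sigma_{n+3}$ and so far out of reach of $\bd\Sigma_{n+2}$; it is precisely the passage to a $\Sigma_n$-oracle---which makes $\psi$ into a $\Pi_1^0$ property and replaces the choice function by an actual set $P$ via $\WKL_0$ and Lemma~\ref{lem:RPi_1^-}---that brings the surviving pigeonhole down to the $\bd\Sigma_2^0$ level. The remaining steps (coding of triples, the reduction to countable models, and the fact that an $\omega$-extension cannot destroy $\Sigma_n$-comprehension) are routine.
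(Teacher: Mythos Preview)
Your proof is correct and follows essentially the same route as the paper's: expand to a second-order model of $\bd\Sigma_2^0+\WKL_0$, use Lemma~\ref{lem:RPi_1^-} with the constant bound $a$ to extract a total choice function $P\in\mathcal X$, and then run a $\bd\Sigma_2^0$ pigeonhole on the fibres of $P$. The only cosmetic difference is that the paper treats $n=0$ and relativizes, whereas you handle all $n$ uniformly via $\Sigma_n$-comprehension (exactly as in the proof of Theorem~\ref{thm:kaufmann-clote}); you also spell out the reduction to countable models and the pigeonhole contradiction more explicitly than the paper does.
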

\begin{proof}
    We only show the case of $n=0$ and the rest can be done by relativizing to the $\Sigma_n$ universal set. Let $M\models\bd\Sigma_{2}$, we first expand $M$ to a second-order structure $(M,\mathcal{X})\models \bd\Sigma_2^0+\WKL_0$.
    Let $\phi(x,y):=\fa z \theta(x,y,z)$ for some $\theta\in \Delta_0$.  
    Suppose $M\models\fa x \exlt {y}{a}\phi(x,y)$. By applying Lemma~\ref{lem:RPi_1^-} with $\theta$ and $f(x) \equiv a$ as a constant function, we obtain some total function $P\in \mathcal{X}$ such that $(M,\mathcal{X})\models \fa x(P(x)<a\wedge\fa z\theta(x,P(x),z))$. By $\bd\Sigma_2^0$, there is some $y_0<a$ such that there are infinitely many $x$ satisfying $P(x)=y_0$, which implies $M\models\exlt y a\excf x \phi(x,y)$. So $M\models \wrd\Pi_1$.
\end{proof} 
\begin{cor}\label{cor:B->rpivsigma}
    For each $n\in\IN$, $\bd\Sigma_{n+2}\vdash \wrd{(\Sigma_{n+1}\vee\Pi_{n+1})}$.
\end{cor}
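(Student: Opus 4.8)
The plan is to reduce the disjunctive case to the already established principle $\wrd\Pi_{n+1}$ of Lemma~\ref{lem:B->rpi}, absorbing the $\Sigma_{n+1}$-part by the ordinary regularity principle $\rd\Sigma_{n+1}$, which follows from $\bd\Sigma_{n+2}$ by Mills and Paris~\cite{art:Paris-Mills}. So I would fix $n\in\IN$, a model $M\models\bd\Sigma_{n+2}$, a formula $\phi(x,y):=\alpha(x,y)\vee\beta(x,y)$ with $\alpha\in\Sigma_{n+1}(M)$ and $\beta\in\Pi_{n+1}(M)$, and a parameter $a\in M$, and assume $M\models\fa x\exlt y a\phi(x,y)$. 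The key move is a case distinction inside $M$, legitimate by the excluded middle, according to whether $M\models\excf x\exlt y a\alpha(x,y)$ or not.

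In the first case, the instance $\rd\alpha$ of $\rd\Sigma_{n+1}$ immediately yields $M\models\exlt y a\excf x\alpha(x,y)$, and a fortiori $M\models\exlt y a\excf x\phi(x,y)$. In the second case, I would fix $b\in M$ with $M\models\falg x b\falt y a\neg\alpha(x,y)$; then, for every $x>b$, a witness $y<a$ of $\phi(x,y)$ must witness $\beta$, so $M\models\falg x b\exlt y a\beta(x,y)$. Next I would pad $\beta$ below $b$, putting $\beta'(x,y):=(x\leq b\wedge y=0)\vee(x>b\wedge\beta(x,y))$, which is equivalent over a weak base theory to a $\Pi_{n+1}$-formula — using that $\Pi_{n+1}$ is closed under disjunction and absorbs $\Delta_0$-conjuncts — with $b$ carried as an additional parameter. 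Since $M\models\fa x\exlt y a\beta'(x,y)$, Lemma~\ref{lem:B->rpi} gives some $y_0<a$ with $M\models\excf x\beta'(x,y_0)$; as $\beta'(x,y_0)$ entails $x\leq b\vee\beta(x,y_0)$ and the set of $x\leq b$ is bounded, this forces $M\models\excf x\beta(x,y_0)$, hence again $M\models\exlt y a\excf x\phi(x,y)$. In either case $M\models\wrd\phi$; letting $\phi$ range over $\Sigma_{n+1}\vee\Pi_{n+1}$ and recalling $\bd\Sigma_{n+2}\vdash\ind\Delta_0$ then gives the corollary.

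The only point that I expect to need genuine care is the routine verification that the padded formula $\beta'$ can be pushed back into $\Pi_{n+1}$ up to provable equivalence (together with the harmless tracking of the extra parameter $b$); the case split itself is free, being merely the excluded middle applied to the single $\Sigma_{n+2}$-sentence $\neg\excf x\exlt y a\alpha(x,y)$, with no effectivity required. I do not anticipate any conceptual obstacle beyond this bookkeeping.
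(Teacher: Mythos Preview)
Your proposal is correct and follows essentially the same approach as the paper's proof: a case split between the $\Sigma_{n+1}$-disjunct being satisfied cofinally (handled by $\rd\Sigma_{n+1}$) versus the $\Pi_{n+1}$-disjunct holding eventually (handled by Lemma~\ref{lem:B->rpi}). The paper phrases the split from the $\Pi_{n+1}$ side (``either $\exists b\,\forall x>b\,\exists y<a\,\psi(x,y)$ or else $\exists^{\cf} x\,\exists y<a\,\phi(x,y)$''), which is just the contrapositive of yours, and it suppresses the padding of $\beta$ below $b$ that you spell out; your treatment of $\beta'$ is in fact more careful than the paper's one-line appeal to Lemma~\ref{lem:B->rpi}.
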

\begin{proof}
    Fix $n\in\IN$ and let $M\models \bd\Sigma_{n+2}$, $\phi(x,y)\in\Sigma_{n+1}(M)$ and $\psi(x,y)\in\Pi_{n+1}(M)$. Suppose $M\models \fa x\exlt y a(\phi(x,y)\vee\psi(x,y))$ for some $a\in M$.
    If $M\models\falg x b\exlt y a\psi(x,y)$ for some $b\in M$, then by Lemma~\ref{lem:B->rpi}, $M\models\excf x\exlt y a\psi(x,y)$ and the conclusion holds.
    Otherwise $M\models\excf x\exlt y a\phi(x,y)$, then by $\rd\Sigma_{n+1}$, $M\models \exlt y a\excf x\phi(x,y)$ and the conclusion holds again.
\end{proof}
\begin{remark}
    There is also a direct model-theoretic proof similar to Proposition~\ref{prop:endextrpi}. One only need to notice that over $\bd\Sigma_{n+1}$, $\fa x\exlt y a(\phi(x,y)\vee\psi(x,y))$ is equivalent to a $\Pi_{n+2}$-formula.
\end{remark}
\begin{lem}\label{lem:rpi->B2}
    For each $n\in\IN$, $\wrd(\Sigma_n\wedge\Pi_n)\vdash\ind\Sigma_{n+1}$.
\end{lem}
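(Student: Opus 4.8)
The plan is to assume $M\models\wrd(\Sigma_n\wedge\Pi_n)$ and derive $\ind\Sigma_{n+1}$ by contradiction, working with a failing instance of the $\Sigma_{n+1}$-induction axiom (equivalently, a proper $\Sigma_{n+1}$-definable cut of $M$). Before anything else I would record a bootstrap: $\wrd(\Sigma_n\wedge\Pi_n)$ already proves $\bd\Sigma_{n+1}$, hence $\ind\Sigma_n$ (so also $\ind\Pi_n$ and $\bd\Sigma_n$ become available). For this it is enough to use the sub-principle $\wrd\Sigma_n$: if $\bd\Sigma_{n+1}$, equivalently $\bd\Pi_n$, failed — say $M\models\falt x a\ex y\pi(x,y)$ with $\pi\in\Pi_n(M)$ but with no uniform bound on the witnesses — then $\Psi(b,x):\equiv x<a\wedge\fale y b\neg\pi(x,y)$ is, over $\bd\Sigma_{n-1}$ (obtained inductively; for $n=0$ there is nothing to do), a $\Sigma_n$-formula with $M\models\fa b\exlt x a\Psi(b,x)$. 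Applying $\wrd\Sigma_n$ with bound $a$ gives a fixed $x_0<a$ with $\Psi(b,x_0)$ holding for cofinally many $b$; since $\Psi(b,x_0)$ is decreasing in $b$, this forces $M\models\fa y\neg\pi(x_0,y)$, contradicting the hypothesis at $x=x_0$.

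Now suppose $\ind\Sigma_{n+1}$ fails and fix a $\Sigma_{n+1}$-formula $\psi(x)=\ex u\,\theta(x,u)$ with $\theta\in\Pi_n(M)$ and an element $a$ with $M\models\psi(0)\wedge\fa x(\psi(x)\to\psi(x+1))\wedge\neg\psi(a)$. Put $I:=\{x\leq a:\psi(x)\}$ and let $g(x)$ be the least $u$ with $\theta(x,u)$, which is defined exactly on $I$; thus $I$ is closed under successor below $a$, contains $0$, and omits $a$. The crucial preliminary fact is that $g$ is unbounded on $I$: were $g\leq b$ on $I$, then on $[0,a]$ the set $I$ would coincide with $\{x:\neg\fale u b\neg\theta(x,u)\}$, and $\neg\fale u b\neg\theta(x,u)$ is (over $\bd\Sigma_n$) a $\Pi_n$-formula, so $I$ would be a proper $\Pi_n$-definable cut, contradicting $\ind\Pi_n$.

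Next I would package this pathology into one $\Sigma_n\wedge\Pi_n$-instance of the weak regularity principle. Set $G(t):=\max\bigl(\{x\leq a:g(x)\leq t\}\cup\{0\}\bigr)$; the set is $\Pi_n$-definable and bounded, so it has a greatest element by $\ind\Sigma_n$, making $G$ a total function $M\to[0,a]$ whose values are cofinal in $I$ as $t$ grows. The point is that the graph $\phi(t,y):\equiv(G(t)=y)$ is, over $\bd\Sigma_n$, equivalent to a $\Sigma_n\wedge\Pi_n$-formula: unravelling $G(t)=y$ as ``$y\leq a$, and $y=0$ or $g(y)\leq t$, and $g(x)>t$ for all $x$ with $y<x\leq a$'', the middle clause becomes the $\Pi_n$-formula $y=0\vee\neg\fale u t\neg\theta(y,u)$ and the last clause becomes the bounded-universal, hence $\Sigma_n$, formula $\fale x a\,\fale u t\,(x\leq y\vee\neg\theta(x,u))$, while $y\leq a$ is $\Delta_0$. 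On one hand $M\models\fa t\exlt y{a+1}\phi(t,y)$, since $y=G(t)\leq a$ always works; on the other hand, every fibre $\{t:G(t)=y\}$ is bounded — if $y\in I$ then $y+1\in I$, so $G(t)\geq y+1$ once $t\geq g(y+1)$, and if $y\notin I$ the fibre is empty — so $M\models\neg\,\exlt y{a+1}\excf t\phi(t,y)$. This contradicts the instance $\wrd\phi$ with bound $a+1$, and the proof is complete.

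The step I expect to be the main obstacle is the complexity computation in the previous paragraph: the graph of $G$ naively mentions a bounded existential quantifier over the $\Pi_n$-matrix $\theta$, which a priori is neither $\Sigma_n$ nor $\Pi_n$; the reason everything lands in exactly $\Sigma_n\wedge\Pi_n$ is that such a quantifier can be rewritten as the negation of a bounded universal quantifier over the $\Sigma_n$-formula $\neg\theta$, which is precisely why $\bd\Sigma_n$ must be secured first and dictates the order of the argument. The auxiliary bookkeeping — that $\wrd(\Sigma_n\wedge\Pi_n)$ genuinely delivers $\bd\Sigma_{n+1}$ and $\ind\Sigma_n$, set up by induction on $n$ with a trivial base — is routine by comparison.
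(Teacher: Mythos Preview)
Your argument is correct and follows the same overall strategy as the paper's---secure $\ind\Sigma_n$ first by an induction, then from a hypothetical proper $\Sigma_{n+1}$-cut manufacture a total function into a bounded interval whose graph is $\Sigma_n\wedge\Pi_n$ and each of whose fibres is bounded, contradicting $\wrd(\Sigma_n\wedge\Pi_n)$---but the two proofs differ in how that function is built. The paper runs the induction internally as $\ind\Sigma_k+\wrd(\Sigma_n\wedge\Pi_n)\vdash\ind\Sigma_{k+1}$ for $k\leq n$, and at the inductive step introduces the cumulative formula $\mu(x,y):=\falt{y'}{y}\exlt{x'}{x}\theta(x',y')$; the key instance is then simply $\mu(x,y)\wedge\neg\mu(x,y+1)$, which is visibly $\Pi_k\wedge\Sigma_k$, at the price of passing to an auxiliary sub-cut $J=\{y:\ex x\mu(x,y)\}$ and arguing that $\mu(x,\cdot)$ never exhausts $J$. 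Your pointwise construction $G(t)=\max\{x\leq a:g(x)\leq t\}$ dispenses with the auxiliary cut and works directly with the original $I$, but requires a slightly longer verification that the graph of $G$ lies in $\Sigma_n\wedge\Pi_n$. Each approach has its own economy; both are fine.

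Two minor remarks. In the bootstrap you claim $\Psi=\fale{y}{b}\neg\pi$ is $\Sigma_n$ ``over $\bd\Sigma_{n-1}$'', but closing $\Sigma_n$ under bounded universal quantification needs $\bd\Sigma_n$, not $\bd\Sigma_{n-1}$; fortunately the inductive hypothesis (the lemma at level $n-1$, together with $\wrd(\Sigma_{n-1}\wedge\Pi_{n-1})\subseteq\wrd(\Sigma_n\wedge\Pi_n)$) already yields $\ind\Sigma_n$ and hence $\bd\Sigma_n$, so this is only an index slip. And once $\ind\Sigma_n$ is available by that route, the separate bootstrap to $\bd\Sigma_{n+1}$ is redundant: your main argument uses nothing beyond $\ind\Sigma_n$.
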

\begin{proof}
    We prove for each $k\leq n$, $\ind\Sigma_k+\wrd(\Sigma_{n}\wedge\Pi_n)\vdash \ind\Sigma_{k+1}$, and the lemma follows by induction.
    Let $M\models \ind\Sigma_{k}+\wrd(\Sigma_{n}\wedge\Pi_n)$, suppose $M\models \neg\ind\Sigma_{k+1}$, then there is a proper cut $I\subseteq M$ defined by $\phi(y):=\ex x\theta(x,y)$, where $\theta(x,y)\in\Pi_k(M)$. Let $\mu(x,y):=\falt {y'} y\exlt {x'} x\theta(x',y')$. Then $\mu(x,y)\in\Pi_k(M)$ over $\ind\Sigma_k$ and $\phi^*(y)=\ex x\mu(x,y)$ also defines a cut $J\subseteq I$.
    $J$ is closed under successor by its definition.
    For any $x\in M$, if $M\models\mu(x,y)$ for all $y\in J$, then $y\in J$ is defined by $\mu(x,y)$ in $M$, which contradicts $M\models \ind\Sigma_k$. So for each $x\in M$, we may take the largest $y\in J$ satisfying $\mu(x,y)$ by $\ind\Sigma_k$. Fixing some arbitrary $a>J$, we have 
    \[M\models \fa x\exlt y a(\mu(x,y)\wedge\neg\mu(x,y+1)).\]
    Applying $\wrd(\Sigma_n\wedge\Pi_n)$, there is some $y_0<a$ such that
    \[M\models \excf x(\mu(x,y_0)\wedge\neg\mu(x,y_0+1)).\]
    By the definition of $\mu(x,y)$, this implies $y_0\in J$ and $y_0+1\notin J$, which contradicts the fact that $J$ is a proper cut of $M$ closed under successor.
    So $M\models \ind\Sigma_{k+1}$.
\end{proof}

\begin{lem}[Independently by Leszek A. Ko\l odziejczyk]\label{lem:rpi->B1}
    For each $n\in\IN$, $\wrd{\Sigma_0(\Sigma_n)}\vdash\bd\Sigma_{n+2}$.
\end{lem}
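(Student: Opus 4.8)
The plan is to prove the contrapositive-style statement by showing that $\wrd\Sigma_0(\Sigma_n)$ implies each instance of $\bd\Sigma_{n+2}$ directly, using the characterization $\bd\Sigma_{n+2} \Leftrightarrow \rd\Pi_{n+1}$ from Mills--Paris. So fix $M \models \wrd\Sigma_0(\Sigma_n)$; note that $\wrd\Sigma_0(\Sigma_n)$ includes $\ind\Delta_0$, and by Lemma~\ref{lem:rpi->B2} (with $\Sigma_n \wedge \Pi_n \subseteq \Sigma_0(\Sigma_n)$) we even get $M \models \ind\Sigma_{n+1}$, which is a convenient base theory: over $\ind\Sigma_{n+1}$, any $\Sigma_0(\Sigma_n)$-formula is $\Delta_{n+1}$, and $\Sigma_{n+1}$/$\Pi_{n+1}$ formulas admit the usual normal forms. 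I would aim to derive $\rd\Pi_{n+1}$ (equivalently, $\bd\Sigma_{n+2}$): assume $M \models \excf{x}\exlt{y}{a}\phi(x,y)$ with $\phi \in \Pi_{n+1}$, and show $M \models \exlt{y}{a}\excf{x}\phi(x,y)$.

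The key maneuver is to convert the $\Pi_{n+1}$ regularity instance into a $\Sigma_0(\Sigma_n)$ \emph{weak} regularity instance by absorbing the unbounded quantifier "for cofinally many $x$" into a fresh bounded counting variable. Concretely, write $\phi(x,y) = \forall u\, \psi(x,y,u)$ with $\psi \in \Sigma_n$. For the antecedent $\excf{x}\exlt{y}{a}\phi(x,y)$, I want to replace it by a statement of the form $\fa{v}\exlt{y}{a}\,\Theta(v,y)$ where $\Theta$ is $\Sigma_0(\Sigma_n)$ and then apply $\wrd\Sigma_0(\Sigma_n)$. The natural choice: for each cut-height $v$, there is $x > v$ and $y < a$ with $\phi(x,y)$; but the "$\phi(x,y)$" part is $\Pi_{n+1}$, not $\Sigma_0(\Sigma_n)$. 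The trick (this is where $\ind\Sigma_{n+1}$ is used) is to bound the $u$-quantifier: since we are only asserting $\phi$ holds, and we may first truncate using that over $\ind\Sigma_{n+1}$ a $\Pi_{n+1}$ statement true at some point can be "certified up to a bound" — but that requires $\bd\Sigma_{n+1}$, which we have. So for a witness $x$, the set $\{u : \neg\psi(x,y,u)\}$ being empty is what we need; instead I introduce the counting reformulation: let $\Theta(v,y) := \exists x\,[\,v < x \le g(v) \wedge \forall u \le g(v)\,\psi(x,y,u)\,]$ for a suitable function, but a cleaner route avoids choosing $g$ by using a pairing/coding argument from $\ind\Sigma_{n+1}$ to search for $x$ below an existential witness — making $\Theta$ genuinely $\Sigma_{n+1}$, hence in $\Sigma_0(\Sigma_n)$ only after one more truncation. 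The honest statement is: over $\ind\Sigma_{n+1}$, the formula "$\exists x > v\, \exists y < a\,\phi(x,y)$" where $\phi \in \Pi_{n+1}$ is $\Sigma_{n+2}$ in general, so I cannot put it directly into $\wrd\Sigma_0(\Sigma_n)$. The resolution must exploit that $\Sigma_0(\Sigma_n)$ allows \emph{bounded} quantification over $\Sigma_n$-matrices, so I should instead run the weak regularity principle with the \emph{defining parameter of the cut} playing the role of $x$: apply $\wrd$ to the formula $\chi(x,\tuple{y,u}) \in \Sigma_0(\Sigma_n)$ expressing "$y < a$ and $\psi(x,y,u)$ and [some bookkeeping pairing $x$ with the pair $\tuple{y,u}$]", so that $\excf{x}$ on the outside of the \emph{conclusion} of $\wrd$ gives cofinally many $x$ fixing a single $\tuple{y_0,u_0}$ — then vary $u$.

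I would carry this out as: \textbf{(1)} record that $\wrd\Sigma_0(\Sigma_n) \vdash \ind\Sigma_{n+1}$ via Lemma~\ref{lem:rpi->B2}; \textbf{(2)} fix a potential failure of $\bd\Sigma_{n+2}$, i.e. a $\Pi_{n+1}$-formula $\phi(x,y)= \forall u\,\psi(x,y,u)$, $a \in M$, with $M \models \fa{x}\exlt{y}{a}\phi(x,y)$ but $M \not\models \exlt{y}{a}\excf{x}\phi(x,y)$ (using the $\rd\Pi_{n+1} \Leftrightarrow \bd\Sigma_{n+2}$ equivalence, after first reducing $\excf x$ to $\fa x$ using that the failure set $\{x : \exlt{y}{a}\phi(x,y)\}$ must be cofinal); \textbf{(3)} the negated conclusion says: for every $y < a$, there is a bound $b_y$ with $\fa{x > b_y}\neg\phi(x,y)$, i.e. $\fa{x>b_y}\exists u\,\neg\psi(x,y,u)$ — now take $b = \max_{y<a} b_y$ by $\ind\Sigma_{n+1}$ (bounded maximum of a $\Pi_{n+1}$-definable-below-$a$ family, using $\bd\Sigma_{n+1}$), so $\fa{x > b}\fa{y < a}\exists u\,\neg\psi(x,y,u)$; \textbf{(4)} combine with the antecedent to derive a contradiction by applying $\wrd$ to an explicitly $\Sigma_0(\Sigma_n)$-formula built from $\psi$ — the construction must be such that the "$\excf x$" produced by $\wrd$ in its conclusion selects a single value of the bounded witness-package, against which step (3) gives an unbounded-in-$x$ counterexample below $b$, contradiction. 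The \textbf{main obstacle} I expect is exactly step (4): arranging the formula fed to $\wrd\Sigma_0(\Sigma_n)$ so that it is syntactically in $\Sigma_0(\Sigma_n)$ (only bounded quantifiers outside $\Sigma_n$ matrices) while still being strong enough that its weak-regularity conclusion contradicts the uniform bound $b$ from step (3) — this is delicate precisely because, unlike $\Pi_{n+1}$ or $\Sigma_{n+1}$, the class $\Sigma_0(\Sigma_n)$ is fragile and the encoding of "cofinally many $x$ witness $\phi$" has to be threaded through a bounded counting parameter rather than an honest unbounded quantifier; getting the quantifier complexity to land in $\Sigma_0(\Sigma_n)$ is where the real work lies, and it is likely where $\ind\Sigma_{n+1}$ (for coding bounded sequences of $\Sigma_n$-witnesses) is genuinely needed.
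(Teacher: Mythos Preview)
Your plan contains a concrete indexing error that derails the whole approach: you quote the Mills--Paris equivalence as $\bd\Sigma_{n+2}\Leftrightarrow\rd\Pi_{n+1}$, but the correct statement is $\bd\Sigma_{n+2}\Leftrightarrow\rd\Pi_n$ (equivalently $\rd\Sigma_{n+1}$). Targeting $\rd\Pi_{n+1}$ means you are in effect trying to derive $\bd\Sigma_{n+3}$ from $\wrd\Sigma_0(\Sigma_n)$, which is false since the latter is exactly $\bd\Sigma_{n+2}$. This is also why your step~(3) is circular: collecting the bounds $b_y$ over $y<a$ from $\forall y<a\,\exists b\,\forall x>b\,\exists u\,\neg\psi(x,y,u)$ is an instance of $\bd\Pi_{n+1}=\bd\Sigma_{n+2}$, the very principle you are trying to establish. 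And the obstacle you anticipate in step~(4)---squeezing a $\Pi_{n+1}$-formula into $\Sigma_0(\Sigma_n)$---is not a technicality but a genuine obstruction for the same reason.

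Once you correct the target to $\rd\Pi_n$, the difficulty evaporates: now $\phi\in\Pi_n$, so anything built from $\phi$ by bounded quantifiers already lies in $\Sigma_0(\Sigma_n)$. The paper's argument does exactly this. Assume $M\models\excf x\exlt y a\phi(x,y)$ with $\phi\in\Pi_n$ (and $\exlt y a\phi(0,y)$ for convenience). For each $z$, use $\ind\Sigma_{n+1}$ to pick the \emph{largest} $x<z$ with $\exlt y a\phi(x,y)$, and colour $z$ by any witness $y<a$ for that $x$. This gives
\[
M\models\fa z\,\exlt y a\,\exlt x z\bigl(\phi(x,y)\wedge\fain{x'}{(x,z)}\neg\exlt{y'}{a}\,\phi(x',y')\bigr),
\]
whose matrix is visibly $\Sigma_0(\Sigma_n)$. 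Applying $\wrd\Sigma_0(\Sigma_n)$ yields a single $y_0<a$ such that cofinally many $z$ receive colour $y_0$; since the associated $x$ is the largest below $z$ with the $\Pi_n$-property and such $x$ are cofinal by hypothesis, the witnesses $x$ with $\phi(x,y_0)$ are themselves cofinal. Hence $M\models\exlt y a\excf x\phi(x,y)$. Your step~(1) is correct and is used; the rest should be replaced by this colouring argument.
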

\begin{proof}
    Let $M\models\wrd\Sigma_0(\Sigma_{n})$. We show $M\models \rd\Pi_n$, which is equivalent to $\bd\Sigma_{n+2}$. 
    By Lemma~\ref{lem:rpi->B2}, $M\models\ind\Sigma_{n+1}$.
    Suppose $M\models \excf x \exlt y a \phi(x,y)$ for some $\phi\in\Pi_n(M)$, and without loss of generality, we assume $M\models \exlt y a\phi(0,y)$.
    For each $z\in M$, we find the largest $x<z$ such that $M\models\exlt y a\phi(x,y)$, and `color' $z$ by the witness $y$. Formally,
    \[M\models\fa z\exlt y a\exlt x z(\phi(x,y)\wedge\fain {x'}{(x,z)}\neg\exlt y a\phi(x',y)),\]
    where $(x,z)$ refers to the open interval between $x$ and $z$.
    By $\wrd\Sigma_0(\Sigma_n)$, 
    \[M\models\exlt y a\excf z\exlt x z(\phi(x,y)\wedge\fain {x'}{(x,z)}\neg\exlt y a\phi(x',y)).\]
    which implies $M\models \exlt y a\excf x\phi(x,y)$.
\end{proof}

\begin{thm}\label{thm:rpiequivalence}
    For each $n\in \IN$, $\wrd\Sigma_0(\Sigma_n)\Leftrightarrow\wrd(\Sigma_{n+1}\vee\Pi_{n+1})\Leftrightarrow\bd\Sigma_{n+2}$.
\end{thm}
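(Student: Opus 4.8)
The plan is to establish the circle of implications by stitching together the lemmas already proved. Recall we have $\bd\Sigma_{n+2}\vdash\wrd(\Sigma_{n+1}\vee\Pi_{n+1})$ from Corollary~\ref{cor:B->rpivsigma}, and $\wrd\Sigma_0(\Sigma_n)\vdash\bd\Sigma_{n+2}$ from Lemma~\ref{lem:rpi->B1}. So the only missing link in the chain
\[\bd\Sigma_{n+2}\Rightarrow\wrd(\Sigma_{n+1}\vee\Pi_{n+1})\Rightarrow\wrd\Sigma_0(\Sigma_n)\Rightarrow\bd\Sigma_{n+2}\]
is the middle implication $\wrd(\Sigma_{n+1}\vee\Pi_{n+1})\vdash\wrd\Sigma_0(\Sigma_n)$. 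First I would record that $\ind\Delta_0$ is common to all three theories by definition, so it suffices to derive each instance of $\wrd\phi$ for $\phi\in\Sigma_0(\Sigma_n)$ from $\wrd(\Sigma_{n+1}\vee\Pi_{n+1})$.

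The key observation is that any $\phi(x,y)\in\Sigma_0(\Sigma_n)$ — a Boolean combination of $\Sigma_n$-formulas closed under bounded quantification — is, over $\bd\Sigma_{n+1}$ (which follows from $\wrd\Pi_n$, a consequence of $\wrd(\Sigma_{n+1}\vee\Pi_{n+1})$ via the subclass $\wrd\Pi_n\subseteq\wrd\Pi_{n+1}$, or more directly over $\ind\Delta_0+\bd\Sigma_{n+1}$), equivalent to both a $\Sigma_{n+1}$-formula and a $\Pi_{n+1}$-formula: bounded quantifiers over $\Sigma_{n+1}$ collapse under $\bd\Sigma_{n+1}$, and Boolean combinations of $\Delta_{n+1}$-formulas stay $\Delta_{n+1}$. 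Hence $\phi$ is in particular of the form $\Sigma_{n+1}\vee\Pi_{n+1}$ (take the $\Sigma_{n+1}$ disjunct to be $\phi$ itself and the $\Pi_{n+1}$ disjunct to be a contradiction, or just use the $\Pi_{n+1}$ equivalent). The point is that the antecedent $\fa x\exlt y a\phi(x,y)$ and consequent $\exlt y a\excf x\phi(x,y)$ are unchanged when $\phi$ is replaced by a provably equivalent formula, so $\wrd\phi$ follows from $\wrd\psi$ for the equivalent $\psi\in\Sigma_{n+1}\vee\Pi_{n+1}$. So I would need first to check $\wrd(\Sigma_{n+1}\vee\Pi_{n+1})\vdash\bd\Sigma_{n+1}$ — this is immediate since $\bd\Sigma_{n+1}$ is strictly weaker than $\bd\Sigma_{n+2}\Leftrightarrow\rd\Pi_n\Leftarrow\wrd\Sigma_0(\Sigma_n)$, but to avoid circularity I should instead note directly that $\wrd\Pi_n$ (a subclass) already yields $\bd\Sigma_{n+1}$; in fact $\wrd(\Sigma_{n+1}\vee\Pi_{n+1})\supseteq\wrd\Sigma_{n+1}$, and standard arguments show $\wrd\Sigma_{n+1}\vdash\bd\Sigma_{n+1}$ (or one cites that $\wrd\Sigma_0(\Sigma_n)$ will be shown equivalent, closing the loop after the fact — but cleaner to extract $\bd\Sigma_{n+1}$ up front).

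The main obstacle I anticipate is purely bookkeeping: being careful that the normal-form reduction of a $\Sigma_0(\Sigma_n)$-formula to $\Delta_{n+1}$ only uses $\bd\Sigma_{n+1}$, and that $\bd\Sigma_{n+1}$ itself is genuinely available from $\wrd(\Sigma_{n+1}\vee\Pi_{n+1})$ without invoking the equivalence we are trying to prove. One clean route: show $\wrd\Sigma_{n+1}\vdash\ind\Sigma_n$ by an argument parallel to Lemma~\ref{lem:rpi->B2} (or simply note $\Sigma_n\wedge\Pi_n\subseteq\Sigma_{n+1}\vee\Pi_{n+1}$, so Lemma~\ref{lem:rpi->B2} already gives $\ind\Sigma_{n+1}$, hence $\bd\Sigma_{n+1}$). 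With $\ind\Sigma_{n+1}$ in hand, $\bd\Sigma_{n+1}$ is immediate, the normal-form reduction goes through, and the middle implication follows. Combining with Corollary~\ref{cor:B->rpivsigma} and Lemma~\ref{lem:rpi->B1} then closes the three-way equivalence. I would present this as: (1) $\bd\Sigma_{n+2}\Rightarrow\wrd(\Sigma_{n+1}\vee\Pi_{n+1})$ by Corollary~\ref{cor:B->rpivsigma}; (2) $\wrd(\Sigma_{n+1}\vee\Pi_{n+1})\Rightarrow\wrd\Sigma_0(\Sigma_n)$ via the $\ind\Sigma_{n+1}$/normal-form argument just sketched; (3) $\wrd\Sigma_0(\Sigma_n)\Rightarrow\bd\Sigma_{n+2}$ by Lemma~\ref{lem:rpi->B1}.
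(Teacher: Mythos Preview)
Your proposal is correct and follows exactly the paper's cycle $\bd\Sigma_{n+2}\Rightarrow\wrd(\Sigma_{n+1}\vee\Pi_{n+1})\Rightarrow\wrd\Sigma_0(\Sigma_n)\Rightarrow\bd\Sigma_{n+2}$, citing Corollary~\ref{cor:B->rpivsigma} and Lemma~\ref{lem:rpi->B1} for the outer links. The paper dismisses the middle implication in one word as ``trivial'', whereas you supply the justification---first extracting $\ind\Sigma_{n+1}$ (hence $\bd\Sigma_{n+1}$) via Lemma~\ref{lem:rpi->B2} from the inclusion $\Sigma_n\wedge\Pi_n\subseteq\Sigma_{n+1}\vee\Pi_{n+1}$, then reducing each $\Sigma_0(\Sigma_n)$ formula to $\Delta_{n+1}$ over $\bd\Sigma_{n+1}$---which is a sound and non-circular way to fill in that step.
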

\begin{proof}
    $\wrd\Sigma_0(\Sigma_n)\vdash\bd\Sigma_{n+2}$ follows from Lemma~\ref{lem:rpi->B1}. $\bd\Sigma_{n+2}\vdash\wrd(\Sigma_{n+1}\vee\Pi_{n+1})$ follows from Corollary~\ref{cor:B->rpivsigma}. $\wrd(\Sigma_{n+1}\vee\Pi_{n+1})\vdash\wrd\Sigma_0(\Sigma_n)$ is trivial.
\end{proof}
There is an analog of Proposition~\ref{prop:endextrpi} for $M\elemsub_{\ee,\Sigma_{n+2}}K\models M\hyp\ind\Sigma_{n+1}$ and $\wrd(\Sigma_{n+1}\wedge\Pi_{n+1})$. It also leads to a model-theoretic proof of $\ind\Sigma_{n+2}\vdash \wrd(\Sigma_{n+1}\wedge\Pi_{n+1})$.
\begin{proposition}\label{prop:ind->rdmodel}
    Let $M\models\ind\Delta_0+\exp$. For each $n\in\IN$, if there is a proper $\Sigma_{n+2}$-elementary end extension $M\elemsub_{\ee,\Sigma_{n+2}}K\models M\hyp\ind\Sigma_{n+1}$, then $M\models\wrd(\Sigma_{n+1}\wedge\Pi_{n+1})$. 
\end{proposition}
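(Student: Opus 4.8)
My plan is to adapt the proof of Proposition~\ref{prop:endextrpi}, replacing the class $\Pi_{n+1}$ by $\Sigma_{n+1}\wedge\Pi_{n+1}$; the point at which the argument becomes genuinely harder — and where the full strength of $K\models M\hyp\ind\Sigma_{n+1}$ (rather than mere $M\hyp\bd\Sigma_{n+1}$) gets used — is in transferring the antecedent of the implication up to $K$. So I fix $\phi(x,y)=\sigma(x,y)\wedge\pi(x,y)$ with $\sigma\in\Sigma_{n+1}(M)$, $\pi\in\Pi_{n+1}(M)$, fix $a\in M$, and assume $M\models\fa x\exlt y a\,\phi(x,y)$; the goal is to produce some $c<a$ with $M\models\excf x\,\phi(x,c)$.

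First I would extract the necessary collection and coding facts. From $K\models M\hyp\ind\Sigma_{n+1}$ and Lemma~\ref{lem:tfaeMind}, $K$ satisfies $\Sigma_{n+1}$-collection for every bound lying in $M$ and codes every $\Sigma_{n+1}(K)$- and every $\Pi_{n+1}(K)$-definable subset of an interval $[0,a')$ with $a'\in M$. Transferring the collection instances down the $\Sigma_{n+2}$-elementary embedding — after the standard $\bd\Sigma_{n+1}$-collapse they are $\Pi_{n+2}$ — gives $M\models\bd\Sigma_{n+1}$, and running the argument of Proposition~\ref{prop:endextrpi}, which invokes $\bd\Sigma_{n+1}$ in $K$ only at bounds from $M$, for $\pi$ alone even yields $M\models\wrd\Pi_{n+1}$, i.e.\ $M\models\bd\Sigma_{n+2}$. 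Next, for each $x\in K$ I intersect codes of $\{y<a:K\models\sigma(x,y)\}$ and $\{y<a:K\models\pi(x,y)\}$ to obtain a code $c^{*}(x)$ of $\{y<a:K\models\phi(x,y)\}$; since $c^{*}(x)<2^{a}\in M$ and $M$ is an initial segment of $K$, in fact $c^{*}(x)\in M$, and for $m\in M$ the set $c^{*}(m)$ — the same whether computed in $M$ or in $K$, by absoluteness of $\Sigma_{n+1}$ and $\Pi_{n+1}$ formulas — is non-empty because $M\models\exlt y a\,\phi(m,y)$.

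The crucial step is to show $K\models\fa x\exlt y a\,\phi(x,y)$, equivalently $c^{*}(x)\neq\emptyset$ for every $x\in K$. Writing $\sigma(x,y)=\ex u\,\sigma_{0}(x,y,u)$ with $\sigma_{0}\in\Pi_{n}$, the formula $\exlt y a\,\phi(x,y)$ is provably equivalent to $\ex u\,\Xi(x,u)$ where $\Xi(x,u):=\exlt y a(\sigma_{0}(x,y,u)\wedge\pi(x,y))$, and since the bound $a$ lies in $M$, $\Xi$ is equivalent — in both $M$ and $K$ — to a $\Pi_{n+1}$-formula. In $M$ the hypothesis thus reads $M\models\fa x\,\ex u\,\Xi(x,u)$. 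The difficulty is that $\fa x\,\ex u\,\Xi$ is, a priori, only $\Pi_{n+3}$, so it does not transfer upward by bare $\Sigma_{n+2}$-elementarity — this is precisely why $M\hyp\bd\Sigma_{n+1}$ would be too weak and $M\hyp\ind\Sigma_{n+1}$ is needed. The way I would get around it is to use the coding more aggressively: for each $a'\in M$, the coding of bounded $\Sigma_{n+1}/\Pi_{n+1}$-sets in $K$ produces an element $G_{a'}<2^{a'a}$, hence $G_{a'}\in M$, coding $\{\langle x,y\rangle:x<a',\ y<a,\ \phi(x,y)\}$ — the same set whether read in $M$ or in $K$ — so that ``every $x<a'$ is coloured'' becomes a $\Delta_{0}$ fact about $G_{a'}\in M$; these $G_{a'}$ cohere, and combining this coherent family with the statement $K\models\fa x\exlt y a\,\pi(x,y)$ (the pure $\pi$-part, which does transfer upward by the computation in Proposition~\ref{prop:endextrpi}) together with an overspill in $K$ should force some $d\in K\setminus M$ to satisfy $c^{*}(d)\neq\emptyset$. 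I expect this conversion of $(G_{a'})_{a'\in M}$ into an honest witness $d>M$ to be the main obstacle, since $\fa x\exlt y a\,\phi$ itself is not absolute between the two models.

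Once such a $d$ is in hand, the proof finishes exactly as in Proposition~\ref{prop:endextrpi}: choose $c\in c^{*}(d)$, so $c<a$ and $K\models\sigma(d,c)\wedge\pi(d,c)$; as $d$ exceeds every element of $M$, for each $b\in M$ the $\Sigma_{n+2}(M)$-sentence $\exlg x b(\sigma(x,c)\wedge\pi(x,c))$ holds in $K$, witnessed by $d$, hence holds in $M$ by $\Sigma_{n+2}$-elementarity; since $b\in M$ was arbitrary, $M\models\excf x\,\phi(x,c)$, and therefore $M\models\exlt y a\,\excf x\,\phi(x,y)$, which is the required instance of $\wrd(\Sigma_{n+1}\wedge\Pi_{n+1})$.
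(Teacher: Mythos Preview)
Your proposal correctly frames the argument and correctly isolates the crucial step --- producing some $d\in K\setminus M$ with $K\models\exlt y a\,\phi(d,y)$ --- but it leaves exactly that step unresolved, and the route you sketch via the codes $G_{a'}$ and overspill does not work. To run overspill in $K$ you need a formula $\psi(a')$, true for all $a'\in M$, whose complexity is low enough that $K$ has full induction for it; but $K$ is only guaranteed $\ind\Sigma_n$ globally, and any sentence saying ``there is $G<2^{a'a}$ coding $\{\langle x,y\rangle:x<a',\,y<a,\,\phi(x,y)\}$ with every row nonempty'' contains the correctness clause $\falt x{a'}\falt y a(\langle x,y\rangle\in G\leftrightarrow\phi(x,y))$, which is genuinely $\Sigma_0(\Sigma_{n+1})$ and far beyond $\Sigma_n$. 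Knowing $K\models\fa x\exlt y a\,\pi(x,y)$ does not help either: for $d>M$ it yields some $y<a$ with $\pi(d,y)$, but nothing links that $y$ to the $\sigma$-part, and the $G_{a'}$ say nothing about $d$ since $d$ exceeds every $a'\in M$.

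The paper avoids the difficulty entirely by rewriting the hypothesis \emph{before} transferring it. Write $\pi(x,y)=\fa z\,\theta(x,y,z)$ with $\theta\in\Sigma_n$. Over $\ind\Sigma_{n+1}$ --- which $M$ satisfies, since the mere existence of a proper $\Sigma_{n+2}$-elementary end extension already gives $M\models\bd\Sigma_{n+2}$ --- the hypothesis $\fa x\exlt y a\,\phi(x,y)$ is equivalent to
\[
\fa x\,\fa b\,\exlt y a\bigl(\falt z b\,\theta(x,y,z)\wedge\sigma(x,y)\bigr),
\]
and this reformulation is honestly $\Pi_{n+2}$ (the matrix after $\fa x\fa b$ is $\Sigma_{n+1}$), so it transfers to $K$ by elementarity. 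Now fix any $d\in K\setminus M$ and apply Lemma~\ref{lem:tfaeMind}\clref{lem:equivMind:3} --- the strong-collection form of $M\hyp\ind\Sigma_{n+1}$, which is precisely where its strength beyond $M\hyp\bd\Sigma_{n+1}$ enters --- to obtain a single $b^*\in K$ with
\[
K\models\falt y a\bigl(\fa z\,\theta(d,y,z)\leftrightarrow\falt z{b^*}\,\theta(d,y,z)\bigr).
\]
Instantiating $b=b^*$ in the transferred formula gives $K\models\exlt y a\,\phi(d,y)$, and from there your final paragraph finishes the proof verbatim. The missing idea, in one line: bound the inner $\fa z$ by a fresh outer variable to drop the complexity to $\Pi_{n+2}$, transfer, and then in $K$ use the coding lemma to undo the bounding at the single point $d$.
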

\begin{proof}
    Let $\theta(x,y,z)\in\Sigma_{n}(M)$, $\sigma(x,y,w)\in\Pi_{n}(M)$ and $\phi(x,y):=\fa z\theta(x,y,z)\wedge \ex w\sigma(x,y,w)\in\Sigma_{n+1}\wedge\Pi_{n+1}$. Suppose $M\models \fa x\exlt y a\phi(x,y)$ for some $a\in M$. 
    Over $\ind\Sigma_{n+1}$, it is equivalent to the following $\Pi_{n+2}$ formula 
    \[\fa x \fa b\exlt y a(\falt z b\theta(x,y,z)\wedge\ex w\sigma(x,y,w)).\]
    So both $M$ and $K$ satisfy the formula above by elementarity. Pick some arbitrary $d\in K\setminus M$, then
    \[K\models \fa b\exlt y a(\falt z b\theta(d,y,z)\wedge \ex w\sigma(d,y,w)).\]
    By Lemma~\ref{lem:tfaeMind}$\clref{lem:equivMind:2}$, there is some $b\in K$ such that 
    \[K\models \falt y a(\fa z\theta(d,y,z)\leftrightarrow\falt z b\theta(d,y,z)),\]
    which implies
    \[K\models \exlt y a(\fa z\theta(d,y,z)\wedge\ex w\sigma(d,y,w)).\]
    Pick a witness $c<a$ in $M$ such that $K\models \fa z\theta(d,c,z)\wedge\ex w\sigma(d,c,w)$, i.e., $K\models\phi(d,c)$. Now for each $b\in M$, $K\models \exlg x b\phi(x,c)$, where this is witnessed by $d$. Transferring each of these formulas to $M$, we have $M\models\exlg x b\phi(x,c)$ for any $b\in M$, i.e., $M\models\excf x\phi(x,c)$.
\end{proof}

\begin{thm}\label{thm:rdwedge<->ind}
    For each $n\in\IN$, $\wrd(\Sigma_{n+1}\wedge\Pi_{n+1})\Leftrightarrow\ind\Sigma_{n+2}$.
\end{thm}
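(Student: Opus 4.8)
### Proof Plan for Theorem \ref{thm:rdwedge<->ind}

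The plan is to prove both directions separately, exploiting the machinery already assembled. The forward direction $\wrd(\Sigma_{n+1}\wedge\Pi_{n+1})\vdash\ind\Sigma_{n+2}$ should follow a pattern parallel to Lemma~\ref{lem:rpi->B2}, but now pushing one level higher in the hierarchy. First I would note that $\wrd(\Sigma_{n+1}\wedge\Pi_{n+1})$ implies $\wrd(\Sigma_n\wedge\Pi_n)$ trivially, hence $\ind\Sigma_{n+1}$ by Lemma~\ref{lem:rpi->B2} (and in particular $\bd\Sigma_{n+1}$). Arguing by induction, it suffices to show $\ind\Sigma_{k}+\wrd(\Sigma_{n+1}\wedge\Pi_{n+1})\vdash\ind\Sigma_{k+1}$ for $n+1\le k\le n+1$; really the only new case is obtaining $\ind\Sigma_{n+2}$ from $\ind\Sigma_{n+1}$. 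Assuming $M\models\neg\ind\Sigma_{n+2}$, take a proper cut $I\subseteq M$ defined by a $\Sigma_{n+2}$-formula $\phi(y):=\ex x\,\theta(x,y)$ with $\theta\in\Pi_{n+1}(M)$; replacing $\theta$ by $\mu(x,y):=\falt{y'}{y}\exlt{x'}{x}\theta(x',y')$, which over $\ind\Sigma_{n+1}$ is still (equivalent to) a $\Pi_{n+1}$-formula, we get a cut $J$ closed under successor and with the monotonicity property $\mu(x,y)\wedge y'<y\to\mu(x,y')$. As in Lemma~\ref{lem:rpi->B2}, for each $x$ the formula $\mu(x,\cdot)$ cannot hold throughout $J$ (else it would define $J$ contra $\ind\Sigma_{n+1}$), so using $\ind\Sigma_{n+1}$ we pick the largest $y\in J$ with $\mu(x,y)$; fixing $a>J$ we obtain $M\models\fa x\exlt y a(\mu(x,y)\wedge\neg\mu(x,y+1))$. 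The formula $\mu(x,y)\wedge\neg\mu(x,y+1)$ is a conjunction of a $\Pi_{n+1}$- and a $\Sigma_{n+1}$-formula, i.e.\ lies in $\Sigma_{n+1}\wedge\Pi_{n+1}$, so $\wrd(\Sigma_{n+1}\wedge\Pi_{n+1})$ yields $y_0<a$ with $M\models\excf x(\mu(x,y_0)\wedge\neg\mu(x,y_0+1))$, forcing $y_0\in J$ and $y_0+1\notin J$, contradicting that $J$ is a proper successor-closed cut.

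For the converse $\ind\Sigma_{n+2}\vdash\wrd(\Sigma_{n+1}\wedge\Pi_{n+1})$, I would combine Theorem~\ref{thm:clote} with Proposition~\ref{prop:ind->rdmodel}, but there is a gap: Theorem~\ref{thm:clote} and Proposition~\ref{prop:ind->rdmodel} require $\ind\Delta_0+\exp$, and more importantly the model-theoretic route only directly handles \emph{countable} models and needs $\exp$. The clean way to patch this is the standard trick: $\wrd(\Sigma_{n+1}\wedge\Pi_{n+1})$ is (the universal closure of) a single scheme of first-order sentences, so $\ind\Sigma_{n+2}\vdash\wrd(\Sigma_{n+1}\wedge\Pi_{n+1})$ holds iff every model of $\ind\Sigma_{n+2}$ satisfies each instance, and by a compactness/Löwenheim–Skolem argument it suffices to verify this for countable models of $\ind\Sigma_{n+2}$. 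Moreover $\ind\Sigma_{n+2}\vdash\ind\Sigma_1\vdash\exp$ for $n\ge 0$ (and for the instance of $\wrd$ with a fixed parameter $a$, the relevant reasoning takes place below $2^{2^{\cdots}}$ of a fixed standard height, which is available), so the hypotheses of Theorem~\ref{thm:clote} and Proposition~\ref{prop:ind->rdmodel} are met. Thus, given a countable $M\models\ind\Sigma_{n+2}$, Theorem~\ref{thm:clote} provides a proper $\Sigma_{n+2}$-elementary end extension $K\models M\hyp\ind\Sigma_{n+1}$, and Proposition~\ref{prop:ind->rdmodel} then gives $M\models\wrd(\Sigma_{n+1}\wedge\Pi_{n+1})$.

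Alternatively — and this is probably the cleaner presentation to avoid fussing over the countability reduction — one can give a direct syntactic proof of $\ind\Sigma_{n+2}\vdash\wrd(\Sigma_{n+1}\wedge\Pi_{n+1})$ mimicking the proof of Corollary~\ref{cor:B->rpivsigma}: write $\phi(x,y)=\psi(x,y)\wedge\chi(x,y)$ with $\psi\in\Sigma_{n+1}$, $\chi\in\Pi_{n+1}$, and from $M\models\fa x\exlt y a\phi(x,y)$ use $\ind\Sigma_{n+2}$ (via $\bd\Sigma_{n+2}$, which it implies) to code, for each $x$, the set of $y<a$ with $\phi(x,y)$; then a pigeonhole-type argument inside $\ind\Sigma_{n+2}$ over the finitely many ``patterns'' of such subsets of $a$ locates a fixed $y_0<a$ lying in cofinally many of these coded sets, giving $M\models\exlt y a\excf x\phi(x,y)$. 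The main obstacle in either route is the same: the antecedent $\fa x\exlt y a\phi(x,y)$ with $\phi\in\Sigma_{n+1}\wedge\Pi_{n+1}$ is genuinely more complex than a $\Pi_{n+1}$ condition, so unlike in Lemma~\ref{lem:B->rpi} one cannot simply apply a single selection lemma; one must either (i) use $M\hyp\ind\Sigma_{n+1}$ in the end extension to ``flatten'' the $\Pi_{n+1}$-part $\fa z\theta(d,y,z)$ to a bounded statement $\falt z b\theta(d,y,z)$ — which is exactly what Proposition~\ref{prop:ind->rdmodel} does and why $M\hyp\ind\Sigma_{n+1}$ (not merely $\bd\Sigma_{n+1}$) is needed — or (ii) in the syntactic route, use the extra strength of $\ind\Sigma_{n+2}$ over $\bd\Sigma_{n+2}$ to do the coding. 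I would adopt route (i) via Theorem~\ref{thm:clote} and Proposition~\ref{prop:ind->rdmodel}, flagging the countability/$\exp$ reduction explicitly, since that keeps the proof short and reuses the paper's main construction; the forward direction is then the more routine part, being a direct adaptation of Lemma~\ref{lem:rpi->B2}.
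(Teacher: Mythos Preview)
Your proposal is correct and follows the paper's approach. Two minor remarks: for the forward direction, note that Lemma~\ref{lem:rpi->B2} is stated for all $n\in\IN$, so applying it with $n+1$ in place of $n$ gives $\wrd(\Sigma_{n+1}\wedge\Pi_{n+1})\vdash\ind\Sigma_{n+2}$ immediately --- there is no ``new case'' to handle and your detailed re-derivation, while correct, is redundant; for the converse, the paper does exactly your route~(i) via Theorem~\ref{thm:clote} and Proposition~\ref{prop:ind->rdmodel}, leaving the L\"owenheim--Skolem reduction to countable models and the observation $\ind\Sigma_{n+2}\vdash\exp$ implicit, which you rightly make explicit.
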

\begin{proof}
    $\wrd(\Sigma_{n+1}\wedge\Pi_{n+1})\vdash\ind\Sigma_{n+2}$ follows from Lemma~\ref{lem:rpi->B2}. For the other direction, given any countable model $M\models\ind\Sigma_{n+2}$, there is a proper end extension $M\elemsub_{\ee,\Sigma_{n+2}}K\models M\hyp\ind\Sigma_{n+1}$ by Theorem~\ref{thm:clote}, and then $M\models\wrd(\Sigma_{n+1}\wedge\Pi_{n+1})$ by Proposition~\ref{prop:ind->rdmodel}.
\end{proof}
\begin{remark}
    In H\'ajek-Pudl\'ak~\cite[I, Lemma 2.49]{book:HP} it was shown that every $\Sigma_0(\Sigma_{n+1})$-formula has the following normal form:
    \[\Qklt {1}{u_1} {v_1}\dots \Qklt {k}{u_k} {v_k}\Psi(u_1\dots u_k,v_1\dots v_k,w_1\dots w_l). \]
    where $k,l\in \IN$, each $Q_k$ is either $\forall$ or $\exists$, $\Psi$ is a Boolean combination of $\Sigma_{n+1}$-formulas and the variable sets $\{u_i\}_{i\leq k}$, $\{v_i\}_{i\leq k}$ and $\{w_i\}_{i\leq l}$ are pairwise disjoint.  

    Our proof of Proposition~\ref{prop:ind->rdmodel} can be refined to show that $\ind\Sigma_{n+2}\vdash\wrd\phi$, where $\phi(x,y)\in\Sigma_0(\Sigma_{n+1})$, and if written in the normal form above, $x$ does not appear in $\{v_i\}_{i\leq k}$, i.e., $x$ is not permitted to appear in the bound of any bounded quantifiers. 
    In contrast, the instance $\phi(z,y)$ of $\wrd\Sigma_0(\Sigma_{n})$ we used to prove $\bd\Sigma_{n+2}$ in Lemma~\ref{lem:rpi->B1} starts with $\exists x<z$ explicitly.
\end{remark}
Finally, we establish the characterization of countable models of $\ind\Sigma_{n+2}$ as promised.
\begin{thm}\label{thm:paris-kirby-ind}
    Let $M$ be a countable model of $\ind\Delta_0$. For each $n\in\IN$, $M$ satisfies $\ind\Sigma_{n+2}$ if and only if $M$ admits a proper $\Sigma_{n+2}$-elementary end extension $K\models M\hyp\ind\Sigma_{n+1}$.
\end{thm}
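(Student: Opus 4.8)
The plan is to read both implications off results already in hand. For the forward direction there is essentially nothing to do: if $M$ is a countable model of $\ind\Sigma_{n+2}$ (which in particular extends $\ind\Sigma_1$ and hence proves $\exp$), then Theorem~\ref{thm:clote} directly supplies a proper $\Sigma_{n+2}$-elementary end extension $M\elemsub_{\ee,\Sigma_{n+2}}K\models M\hyp\ind\Sigma_{n+1}$, which is exactly what is wanted.

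For the converse, assume the countable model $M\models\ind\Delta_0$ admits a proper $\Sigma_{n+2}$-elementary end extension $K\models M\hyp\ind\Sigma_{n+1}$. The subtlety is that the tools I want to invoke --- Proposition~\ref{prop:ind->rdmodel}, and Lemma~\ref{lem:tfaeMind} behind it --- are formulated over $\ind\Delta_0+\exp$, so the first step is to promote $M$ to a model of $\exp$. For this I would use that $M\hyp\ind\Sigma_{n+1}$ contains all instances of $M\hyp\ind\Sigma_1$. Apply such an instance in $K$ to the parameter-free $\Sigma_1$-formula $\phi(x):=\ex y\mathrm{Exp}(x,y)$, where $\mathrm{Exp}$ is the standard $\Delta_0$-graph of $x\mapsto 2^x$: one has $K\models\phi(0)$, and $\ind\Delta_0$ alone proves $\fa x(\phi(x)\rightarrow\phi(x+1))$ --- from the recursion equation $\mathrm{Exp}(x,y)\rightarrow\mathrm{Exp}(x+1,2y)$ together with totality of multiplication --- so for every $a\in M$ we get $K\models\falt x a\ex y\mathrm{Exp}(x,y)$. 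Instantiating $a:=c+1$ for an arbitrary $c\in M$ gives $K\models\ex y\mathrm{Exp}(c,y)$; since this is a $\Sigma_1$-fact with parameter $c\in M$ and the end extension is $\Sigma_1$-elementary (indeed $\Sigma_{n+2}$-elementary), it descends to $M$. As $c$ was arbitrary, $M\models\fa x\ex y\mathrm{Exp}(x,y)$, i.e.\ $M\models\exp$.

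With $M\models\ind\Delta_0+\exp$ secured, the end extension $M\elemsub_{\ee,\Sigma_{n+2}}K\models M\hyp\ind\Sigma_{n+1}$ now meets the hypotheses of Proposition~\ref{prop:ind->rdmodel}, which yields $M\models\wrd(\Sigma_{n+1}\wedge\Pi_{n+1})$; and then $M\models\ind\Sigma_{n+2}$ follows from Theorem~\ref{thm:rdwedge<->ind} (equivalently, Lemma~\ref{lem:rpi->B2} with its index raised by one). This closes the equivalence.

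I expect the $\exp$-extraction to be the only step that is not pure bookkeeping, and within it the points worth stating carefully are that the progressivity clause $\phi(x)\rightarrow\phi(x+1)$ really is available in $K$ below every $a\in M$ --- it is, being $\ind\Delta_0$-provable outright --- and that pulling $\ex y\mathrm{Exp}(c,y)$ back from $K$ to $M$ is a legitimate use of the assumed elementarity, since $\Sigma_1\subseteq\Sigma_{n+2}$. One could instead try to get $\exp$ from the easy direction of Theorem~\ref{thm:paris-kirby}, which yields $M\models\bd\Sigma_2$ from the mere existence of a proper $\Sigma_2$-elementary end extension, but $\bd\Sigma_2$ does not by itself deliver $\exp$, so the detour through $M\hyp\ind\Sigma_1$ appears unavoidable.
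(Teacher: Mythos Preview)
Your proof is correct and follows the same route as the paper: the forward direction is Theorem~\ref{thm:clote}, and the converse goes through Proposition~\ref{prop:ind->rdmodel} and then Theorem~\ref{thm:rdwedge<->ind} (equivalently Lemma~\ref{lem:rpi->B2}). The paper's own proof is terser and does not pause over the $\exp$ hypothesis in Proposition~\ref{prop:ind->rdmodel}; your added paragraph securing $M\models\exp$ via $M\hyp\ind\Sigma_1$ is a legitimate way to fill that in.

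One slip in your closing remark: $\bd\Sigma_2$ \emph{does} deliver $\exp$, since $\bd\Sigma_2\vdash\ind\Sigma_1\vdash\exp$. So the alternative you dismissed --- invoking the easy direction of Theorem~\ref{thm:paris-kirby} to get $M\models\bd\Sigma_{n+2}\supseteq\bd\Sigma_2$ --- would already suffice for $\exp$, and is in fact the shorter path. This does not affect the correctness of your main argument.
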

\begin{proof}
    The direction from left to right follows by Theorem~\ref{thm:paris-kirby-ind}. For the other direction, if $M\elemsub_{\ee,\Sigma_{n+2}}K\models M\hyp\ind\Sigma_{n+1}$, then $M\models\wrd(\Sigma_{n+1}\wedge\Pi_{n+1})$ by Proposition~\ref{prop:ind->rdmodel}, and thus $M\models\ind\Sigma_{n+2}$ by Theorem~\ref{thm:rdwedge<->ind}. 
\end{proof}

The main remaining problem now is to find a purely syntactic proof of Theorem~\ref{thm:rdwedge<->ind}. We conjecture that a more refined tree construction similar to the approach in Lemma~\ref{lem:RPi_1^-} would solve the problem.
\begin{problem}
   Give a direct proof of $\ind\Sigma_{n+2}\vdash\wrd(\Sigma_{n+1}\wedge\Pi_{n+1})$ (and also $\ind\Sigma_{n+2}\vdash\wrd\phi$ where $\phi(x,y)\in\Sigma_0(\Sigma_{n+1})$ as described in the remark above) without using end extensions.
\end{problem}
\section*{Acknowledgement}
The author's research was partially supported by the Singapore Ministry of Education Tier 2 grant AcRF MOE-000538-00 as well as by the NUS Tier 1 grants AcRF R146-000-337-114 and R252-000-C17-114. This work is contained in the author's Ph.D. thesis. I would like to thank my two supervisors Tin Lok Wong and Yue Yang for their guidance, encouragement and helpful discussions. 
I am also indebted to Leszek A. Ko\l odziejczyk for carefully reading drafts of the paper and providing many helpful suggestions.
\bibliographystyle{plain}
\bibliography{end}

\end{document}